\newtheorem{theorem}{Theorem}
\newtheorem{prop}[theorem]{Proposition}
\newtheorem{lemma}[theorem]{Lemma}
\newtheorem{coro}[theorem]{Corollary}
\newtheorem{defi}[theorem]{Definition}
\newtheorem{rema}[theorem]{Remark}
\newtheorem{notation}[theorem]{Notation}
\def\restr#1#2{\mathchoice
              {\setbox1\hbox{${\displaystyle #1}_{\scriptstyle #2}$}
              \restrictionaux{#1}{#2}}
              {\setbox1\hbox{${\textstyle #1}_{\scriptstyle #2}$}
              \restrictionaux{#1}{#2}}
              {\setbox1\hbox{${\scriptstyle #1}_{\scriptscriptstyle #2}$}
              \restrictionaux{#1}{#2}}
              {\setbox1\hbox{${\scriptscriptstyle #1}_{\scriptscriptstyle #2}$}
              \restrictionaux{#1}{#2}}}
\def\restrictionaux#1#2{{#1\,\smash{\vrule height .8\ht1 depth .85\dp1}}_{\,#2}}
\newcommand{\C}{\ensuremath{\mathbb C}}
\newcommand{\CP}{\mathbb{CP}^1}
\newcommand{\Q}{\ensuremath{\mathbb Q}}
\newcommand{\Z}{\ensuremath{\mathbb Z}}
\newcommand{\N}{\ensuremath{\mathbb N}}
\newcommand{\Diff}{\mathrm{Diff}}
\newcommand{\Sep}{\mathrm{Sep}}
\newcommand{\para}{\mathbin{\!/\mkern-5mu/\!}}
\newcommand{\Id}{\mathrm{Id}}
\newcommand{\m}{\mathfrak{m}}
\newcommand{\DD}{\mathcal{D}}
\newcommand{\FF}{\mathcal{F}}
\newcommand{\tF}{\tilde{\mathcal{F}}}
\newcommand{\RR}{\mathcal{R}}
\newcommand{\tL}{\tilde{\mathcal{L}}}
\newcommand{\MM}{\mathcal{M}}
\newcommand{\LL}{\mathcal{L}}
\newcommand{\tS}{\tilde{S}}
\newcommand{\ddx}{\frac{\partial}{\partial x}}
\newcommand{\ddy}{\frac{\partial}{\partial y}}
\newcommand{\tPhi}{\tilde{\Phi}}
\newcommand{\bx}{\bar{x}}
\newcommand{\by}{\bar{y}}
\begin{document}

\title[Sliding invariants and classification of holomorphic foliations]{Sliding invariants and classification of singular holomorphic foliations in the plane}

\author{{\sc Truong} Hong Minh }
\thanks{\textbf{Keywords}: Invariants de glissement, feuilletages holomorphes, classification 
.\\
\indent\textbf{2010 Mathematics Subject Classification}: 34M35, 32S65}
\address{Institut de Mathématiques de Toulouse, UMR5219, Université Toulouse 3}
\email{hong-minh.truong@math.univ-toulouse.fr}

\begin{abstract}{
By introducing a new invariant called the set of slidings, we give a complete strict classification of the class of germs of non-dicritical holomorphic foliations in the plan whose Camacho-Sad indices are not rational. Moreover, we will show that, in this class, the new invariant is finitely determined. Consequently, the finite determination of the class of isoholonomic non-dicritical foliations and absolutely dicritical foliations that have the same Dulac maps are proved.
}\end{abstract}


\maketitle
\section{Introduction}
The problem of classification of germs of foliations in the complex plane is stated by Thom \cite{Zol}. He conjectured that the analytic type of a foliation defined in a neighborhood of a singular point is completely determined by its associated separatrix and its corresponding holonomy. R. Moussu in \cite{Mou} gave a counterexample for this statement and showed that we have to consider the holonomy representation of each irreducible component of  the exceptional divisor in a desingularization instead of the economizes.  For this new version, Thom's problem is proved for cuspidal type singular points \cite{Mou}, \cite{Cer-Mou} and more generally for  quasi-homogeneous foliations \cite{Gen}. However, the new statement of Thom's conjecture was refuted by J.F. Mattei by computing the dimension of the space of isoholonomic deformations \cite{Mat2}, \cite{Mat1}: 
There must be other invariants for the non quasi-homogeneous foliations. This conclusion is confirmed by the number of free coefficients in the normal forms in \cite{Gen-Pau1}, \cite{Gen-Pau2} and in the hamiltonian part of the normal forms of vector field in \cite{Ort-Ros-Vor}.  By adding a new invariant called \emph{set of slidings} this paper solves the problem of strict classification  for the non-dicritical foliations whose Camacho-Sad indices are not rational.  Here, strict classification means up to diffeomorphism tangent to identity.
\subsection{Preliminaries}
A germ of singular foliation $\FF$ in $(\C^2,0)$ is called \emph{reduced} if there exists a coordinate system in which it is defined by a $1$-form whose linear part is 
\begin{equation*}
\lambda_1 y dx + \lambda_2  xdy,\; \frac{\lambda_2}{\lambda_1}\not\in\Q_{< 0},
\end{equation*}
$\lambda=-\frac{\lambda_2}{\lambda_1}$ is called the \emph{Camacho-Sad index} of $\FF$. When $\lambda=0$, the origin is called a \emph{saddle-node} singularity, otherwise it is called \emph{nondegenerate}.  A theorem of A. Seidenberg \cite{Sei, Mat-Mou} says that any singular foliation $\FF$ with isolated singularity admits a canonical \emph{desingularization}. More precisely, there is a holomorphic map
\begin{equation}\label{sigma}
\sigma: \MM\rightarrow (\C^2,0) 
\end{equation}
obtained as a composition of a finite number of blowing-ups at points such that any point $m$ of the \emph{exceptional divisor} $\DD:=\sigma^{-1}(0)$ is  either a regular point or a reduced singularity  of the strict transform $\tilde{\FF}=\sigma^*(\FF)$. An intersection of two irreducible components of $\DD$ is called a \emph{corner}. An irreducible component of $\DD$  is a \emph{dead branch}  if in this component  there is a unique singularity of  $\tF$ that is a corner.

A \emph{separatrix} of $\FF$ is an analytical irreducible invariant curve through the origin of $\FF$. It is well known that  any germ of singular foliation $\FF$ in $(\C^2,0)$ possesses at least one separatrix \cite{Cam-Sad}. When the number of separatrices is finite $\FF$ is \emph{non-dicritical}. Otherwise it is called \emph{dicritical}. 

Denote by $\mathrm{Sing}(\tF)$ the set of all singularities of the strict transform $\tF$. Let $D$ be a non-dicritical irreducible component of the exceptional divisor $\DD$, then $D^*=D\setminus\mathrm{Sing}(\tF)$ is a leaf of $\tF$. Let $m$ be a regular point in $D^*$ and $\Sigma$ a small analytic section through $m$ transverse to $\tF$. For any loop $\gamma$ in $D^*$ based on $m$ there is a germ of a holomorphic return map $ h_\gamma : (\Sigma, m)\rightarrow (\Sigma, m)$ which only depends on the homotopy class of $\gamma$ in the fundamental group $\pi_1(D^*,m)$. The map $h:\pi_1(D^*,m)\rightarrow \mathrm{Diff}(\Sigma,m)$ is called the \emph{vanishing holonomy representation} of $\FF$ on $D$. Let $\FF'$ be a foliation that also admits $\sigma$ as its desingularization map. Assume that $\mathrm{Sing}(\tF')=\mathrm{Sing}(\tF)$ where $\mathrm{Sing}(\tF')$ is the set of singularities of the strict transform $\tF'$. Denote by $h'_\gamma$ in $\mathrm{Diff}(\Sigma,m)$ the vanishing holonomy representation of $\FF$. We say that the vanishing holonomy representation of $\FF$ and $\FF'$ on $D$ are conjugated if there exists $\phi\in\mathrm{Diff}(\Sigma,m)$ such that $\phi\circ h_\gamma= h'_\gamma\circ\phi$. The vanishing holonomy representation of $\FF$ and $\FF'$ are called conjugated if they are conjugated on every non-dicritical irreducible component of $\DD$.
\begin{notation}\label{no1}
We denote by $\mathcal{M}$ the set of all non-dicritical foliations $\FF$ defined on $(\C^2,0)$ such that the Camacho-Sad index of $\tF$ at each singularity is not rational.
\end{notation}
If $\FF$ is in $\mathcal{M}$ then after desingularization all the singularities of $\tF$ are not saddle-node. Moreover, the Chern class of  an irreducible component of divisor, which is an integer, is equal to  the sum of Camacho-Sad indices of the singularities in this component \cite{Cam-Sad}. Therefore, every element in $\MM$ after desingularization admits no dead branch in its exceptional divisor. 

\subsection{Absolutely dicritical foliation}\label{sec1.2}
Let $\sigma$ as in \eqref{sigma} be a composition of a finite number of blowing-ups at points. A germ of singular holomorphic foliation $\LL$ is said $\sigma$-\emph{absolutely dicritical} if the strict transform $\tL=\sigma^*(\LL)$ is a regular foliation and the exceptional divisor $\DD=\sigma^{-1}(0)$ is completely transverse to   $\tL$. When $\sigma$ is the standard blowing-up at the origin, we called $\LL$ a \emph{radial foliation}. At each corner $p=D_i\cap D_j$ of $\DD$, the diffeomorphism from $(D_i,p)$ to $(D_j,p)$ that follows the leaves of $\tL$ is called the \emph{Dulac map} of $\tL$ at $p$. The existence of such foliations for any given $\sigma$ is proved in \cite{Can-Cor}. 
In fact, in \cite{Can-Cor} the authors showed that if in each smooth component of $\DD$ we take any two smooth curves transverse to $\DD$ then there is always an absolutely dicritical foliation admitting  them as their integral curves. We will denote by $\Sep(\tF)\pitchfork\tL$ if at any point $p\in\mathrm{Sing}(\tF)$ the separatrices of $\tF$ through $p$ are transverse to $\tL$. 
\begin{lemma}\label{lem1}
Let $\FF$ be a non-dicritical foliation such that $\sigma$  is its desingularization map. Then there exists a $\sigma$-absolutely dicritical foliation $\LL$ satisfying $\Sep(\tF)\pitchfork\tL$
\end{lemma}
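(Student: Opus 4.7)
The plan is to invoke the Cano--Corral existence result \cite{Can-Cor} and to exploit the flexibility in the choice of the prescribed integral curves to realize the transversality condition $\Sep(\tF)\pitchfork\tL$ at every singularity of $\tF$ simultaneously.

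The first step is a reduction to the non-corner singularities of $\tF$. At any corner $p=D_i\cap D_j$ of $\DD$, the two separatrices of $\tF$ at $p$ are precisely the local branches of $D_i$ and $D_j$; since any $\sigma$-absolutely dicritical foliation $\LL$ is, by definition, transverse to $\DD$, both of these separatrices are automatically transverse to $\tL$. At a non-corner singularity $p$ of $\tF$ lying in a unique component $D_i$, the two separatrices are $D_i$ itself (again transverse to $\tL$ for free) and a second smooth branch $S_p$ transverse to $D_i$ at $p$. It therefore suffices to arrange, at every such $p$, that the unique leaf $L_p$ of $\tL$ through $p$ has tangent direction different from $T_p S_p$ in $T_p\MM$.

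By \cite{Can-Cor}, in each irreducible component of $\DD$ I may freely prescribe any two smooth curves transverse to $\DD$, with footpoints chosen in $\DD\setminus\mathrm{Sing}(\tF)$, as integral curves of $\tL$, and this determines a $\sigma$-absolutely dicritical $\LL$. This gives an infinite-dimensional parameter space of candidate foliations. For each non-corner singularity $p\in D_i$, the condition $T_p L_p = T_p S_p$ singles out a proper closed subset of this parameter space, as it is the preimage of a single point of $\mathbb{P}(T_p\MM)$ under a continuous map, provided that the tangent direction $T_p L_p$ genuinely depends on the prescribed data. Because $\mathrm{Sing}(\tF)$ is finite, the finite union of these bad loci can be avoided, and any $\LL$ chosen from the complement satisfies the desired transversality.

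The main obstacle is the verification that, for every non-corner singularity $p$, the tangent line $T_p L_p$ does depend non-trivially on the prescribed integral curves in (at least) the component $D_i$ containing $p$, so that the bad locus really has positive codimension. The natural way to secure this would be to revisit the construction of \cite{Can-Cor} inductively along the blow-up tree supporting $\sigma$, verifying at each stage that the free data on the component being glued in are rich enough to realize any prescribed tangent direction of $\tL$ at a chosen point of that component, subject only to transversality with $\DD$. Once this local flexibility is in hand, the finite genericity argument above closes the proof.
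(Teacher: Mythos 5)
Your reduction to the non-corner singularities is correct and matches the paper's setup: at a corner both separatrices of $\tF$ are branches of $\DD$, hence automatically transverse to $\tL$, so the only thing to arrange is that at each non-corner singularity $p$ the leaf of $\tL$ through $p$ is not tangent to the strict transform of the separatrix of $\FF$ through $p$. However, the rest of the argument has a genuine gap, and you have named it yourself: everything hinges on the claim that the tangent direction $T_pL_p$ varies non-trivially with the data prescribed in the Cano--Corral construction, and you only \emph{propose} to verify this (``the natural way to secure this would be to revisit the construction \ldots''), you do not verify it. As written, the proof is incomplete precisely at its load-bearing step. There is a second, related weakness: even granting non-constancy, ``the finite union of proper closed subsets can be avoided'' is not automatic in a vaguely specified infinite-dimensional parameter space (a space can very well be a finite union of proper closed subsets); one needs the bad loci to be nowhere dense, or one needs to exhibit a concrete finite-dimensional subfamily in which each bad condition cuts out a proper analytic subset.

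The paper closes exactly this gap by a different and more concrete device, which you may want to compare with. It marks one point $p_i$ on each relevant component of $\DD$ (the points $L_i\cap\DD$ where $L_i$ runs over the strict transforms of the separatrices of $\FF$, plus one auxiliary transverse curve on each component carrying no non-corner singularity), and for each $i$ takes from \cite{Can-Cor} an absolutely dicritical foliation $\LL_i$, defined by a $1$-form $\omega_i$, controlled only at its own point $p_i$ (there $\sigma^*\omega_i=x_i^{m_i}d(x_i+y_i)+h.o.t.$, so the leaf through $p_i$ is transverse to both $\DD$ and $L_i$). It then considers the pencil $\omega_0=\sum_i c_i\omega_i$. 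Because the relevant $1$-jet of $\sigma^*\omega_0$ at each $p_j$ is $x_j^{m_j}d(a_jx_j+b_jy_j)$ with $a_j=\sum_i c_ia_{ij}$ and $b_j=\sum_i c_ib_{ij}$ \emph{linear} in $(c_1,\dots,c_n)$, the simultaneous transversality conditions $a_j\neq 0$, $b_j\neq 0$ amount to avoiding finitely many hyperplanes in $\C^n$, which is trivially possible. This is precisely the rigorous substitute for your genericity argument. Note also that this route creates an obligation you would not have faced (and which the paper discharges by checking equality of vanishing orders and invoking the transversality criterion of \cite{Can-Cor}): a linear combination of absolutely dicritical $1$-forms is not automatically absolutely dicritical. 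If you want to keep your strategy instead, you must actually carry out the inductive analysis of the Cano--Corral construction showing that the tangent direction of the leaf through a prescribed point can be realized arbitrarily among directions transverse to $\DD$, and then replace the abstract ``proper closed subset'' argument by an explicit finite-dimensional family in which the bad loci are proper analytic subsets.
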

\begin{proof}
Denote by $L_1,\ldots,L_k$ the strict transforms of the separatrices of $\FF$. On each component $D$ of $\DD$ that does not contain any singularity of $\tF$ except the corners we take a smooth curve $L_{k+j}$ transverse to $D$. 
Then we have the set of curves $\{L_1,\ldots,L_n\}$ such that each component of $\DD$ is transverse to at least one curve $L_i$. Denote by $p_i=L_i\cap\DD$. By \cite{Can-Cor}, for each $i$ there exists a $\sigma$-absolutely dicritical foliation $\LL_i$ defined by a $1$-form $\omega_i$ verifying that $L_i$ is transverse to $\tL_i$. Choose a local chart $(x_i,y_i)$ at $p_i$  such that $\DD=\{x_i=0\}, L_i=\{y_i=0\}$ and 
\begin{equation*}
\sigma^*\omega_i(x_i,y_i)=x_i^{m_i}d(x_i+y_i) + h.o.t.,
\end{equation*}  
where ``h.o.t.'' stands for higher order term. Write down $\omega_i$ in the local chart $(x_j,y_j)$
\begin{equation}
\sigma^*\omega_i(x_j,y_j)=x_j^{m_j}d(a_{ij}x_j+b_{ij}y_j) + h.o.t..
\end{equation}
Because $\tL_i$ is transverse to $\DD$, we have $b_{ij}\neq 0$.
We define $a_{ii}=b_{ii}=1$. There always exists a vector $(c_1,\ldots,c_n)\in\C^n$ such that for $j=1,\ldots,n$, 
\begin{equation*}
a_j=\sum_{i=1}^n c_i a_{ij}\neq 0 \;\;\text{and}\;\; b_j=\sum_{i=1}^n c_i b_{ij}\neq 0. 
\end{equation*}
Denote by $\omega_0=\sum c_i\omega_i$. Then, in the local chart $(x_j,y_j)$, we have
\begin{equation*}
\sigma^*\omega_0(x_j,y_j)=x_j^{m_j}d(a_{j}x_j+b_{j}y_j) + h.o.t.,\;\;\text{where}\;\; a_j\ne 0, b_j\ne 0.
\end{equation*}
Because $\omega_0$ and $\omega_i$, $i=1,\ldots, n$, have the same multiplicity on each component of $\DD$, they have the same vanishing order. Since each component of $\DD$ contains at least one point $p_i$ and the strict transform $\tL$ of the foliation $\LL$ defined by $\omega_0$ is transverse to $\DD$ in each neighborhood of each $p_i$, $\tL$ is generically transverse to $\DD$. By \cite{Can-Cor}, $\LL$ is absolutely dicritical and satisfies $\Sep(\tF)\pitchfork\tL$.
\end{proof}

\subsection{Slidings of foliations}
Consider first a nondegenerate reduced foliation $\FF$ in $(\C^2,0)$. By \cite{Mat-Mou}, there exists a coordinate system in which $\FF$ is defined by 
$$\lambda y(1+A(x,y))dx + xdy,\;\lambda\notin\Q_{\leq 0},$$
where $A(0,0)=0$. Let $\LL$ be a germ regular foliation whose invariant curve through the origin (we call it the separatrix of $\LL$) is transverse to the two separatrices of $\FF$, which are denoted by $S_1$ and $S_2$. Then we have the following Lemma, whose proof is straightforward. 
\begin{lemma}\label{lem2}
The tangent curve of $\FF$ and $\LL$, denoted  $T(\FF,\LL)$, is a smooth curve transverse to the two separatrices of $\FF$. Moreover, if the separatrix of $\LL$ is tangent to $\{x-cy=0\}$ then $T(\FF,\LL)$ is tangent to $\{x+c\lambda y=0\}$.
\end{lemma}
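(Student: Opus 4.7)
The plan is to compute $T(\FF,\LL)$ directly, starting from the Mattei--Moussu normal form $\omega_{\FF}=\lambda y(1+A(x,y))\,dx+x\,dy$ for $\FF$ and extracting the linear part of the resulting wedge $\omega_{\FF}\wedge\omega_{\LL}$. In this chart the two separatrices of $\FF$ are $S_1=\{x=0\}$ and $S_2=\{y=0\}$, and $\lambda\neq 0$ because $\FF$ is a nondegenerate reduced singularity with $\lambda\notin\Q_{\leq 0}$.

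The first step is to fix a convenient defining $1$-form for $\LL$. Writing $\omega_{\LL}=a(x,y)\,dx+b(x,y)\,dy$, the tangency of the separatrix of $\LL$ to $\{x-cy=0\}$ puts the vector $c\,\partial_x+\partial_y$ in $\ker\omega_{\LL}(0,0)$, so after rescaling I can assume $a(0,0)=1$ and $b(0,0)=-c$. The transversality of the separatrix of $\LL$ to $S_1=\{x=0\}$ forces moreover $c\neq 0$.

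The second step is the core computation. One has
\[
\omega_{\FF}\wedge\omega_{\LL}=\bigl[\lambda y(1+A(x,y))\,b(x,y)-x\,a(x,y)\bigr]\,dx\wedge dy,
\]
so that $T(\FF,\LL)=\{f=0\}$ with $f(x,y):=\lambda y(1+A)b-xa$. A short check using $A(0,0)=0$, $a(0,0)=1$, $b(0,0)=-c$ gives $f(0,0)=0$ and $df(0,0)=-dx-c\lambda\,dy$. Since $\lambda\neq 0$ and $c\neq 0$, this differential is nonzero, so by the implicit function theorem $T(\FF,\LL)$ is a smooth curve at the origin, and its tangent line is $\ker df(0,0)=\{x+c\lambda y=0\}$, which is the announced formula.

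The last step is transversality to $S_1$ and $S_2$. The tangent direction $(c\lambda,-1)$ of $T(\FF,\LL)$ at $0$ is parallel to neither $(0,1)$ nor $(1,0)$, because $c\lambda\neq 0$ and $-1\neq 0$ respectively. No step really presents an obstacle; the only minor care required is the normalization of $\omega_{\LL}$ so that $a(0,0)$ and $b(0,0)$ faithfully record the tangent direction of the separatrix of $\LL$, which is why the authors can legitimately describe the proof as straightforward.
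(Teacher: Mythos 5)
Your proof is correct; the paper gives no argument at all for this lemma (it is simply declared straightforward), and your computation — normalizing $\omega_{\LL}$ so that its kernel at $0$ records the tangent direction $\{x-cy=0\}$, then reading off the linear part $-x-c\lambda y$ of $\omega_{\FF}\wedge\omega_{\LL}/(dx\wedge dy)$ — is exactly the intended direct verification. The two points of care you flag ($c\neq 0$ from transversality to $\{x=0\}$ and $\lambda\neq 0$ from nondegeneracy) are indeed the only places where the hypotheses enter.
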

\begin{figure}
\begin{center}
\includegraphics[scale=.45]{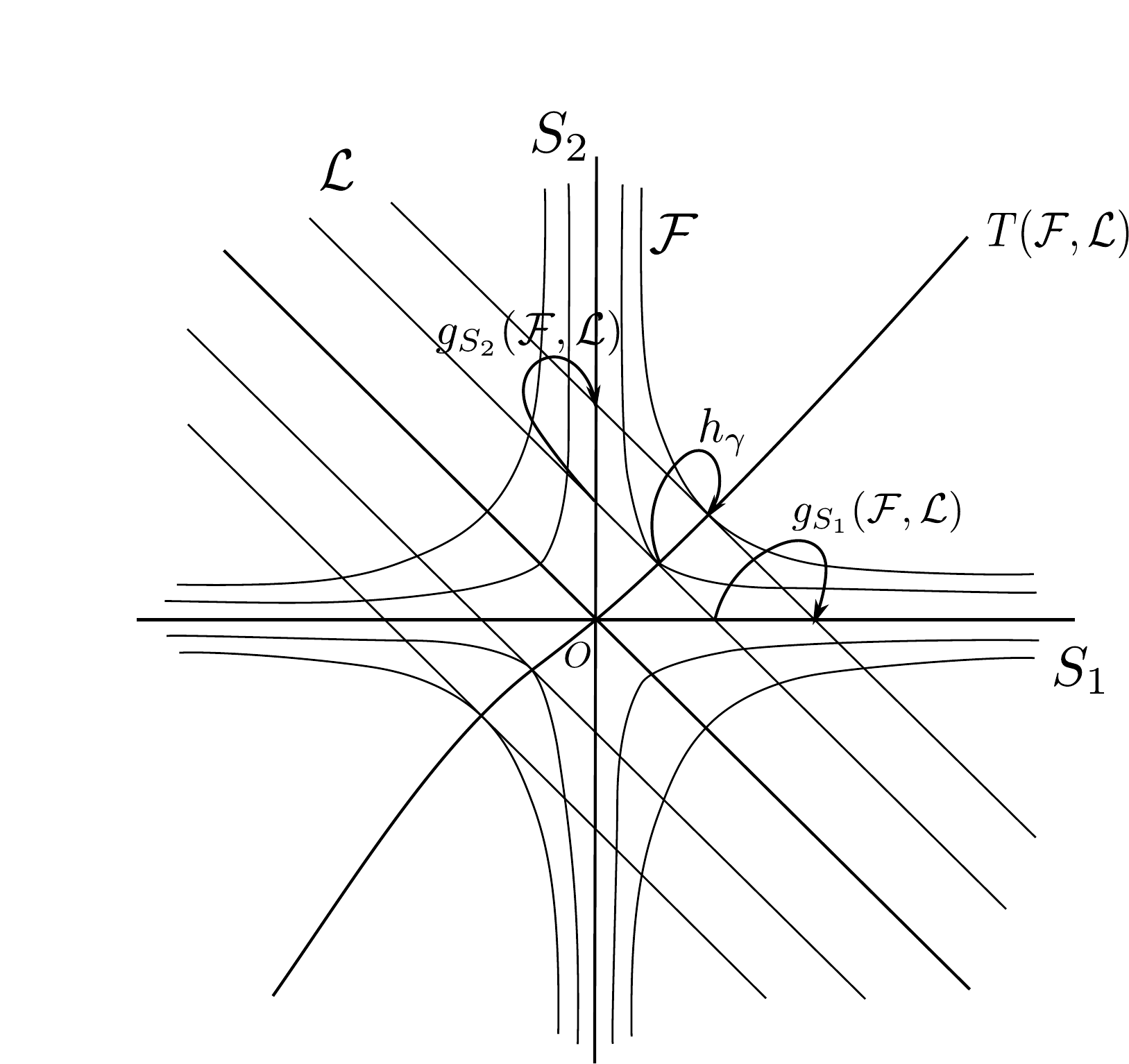}
\caption{Sliding of $\FF$ and $\LL$. }
\label{figure1}
\end{center}
\end{figure}
After a standard blowing-up $\sigma_1$ at the origin, the strict transform $\tilde{T}(\FF,\LL)$ of $T(\FF,\LL)$ is transverse to $\tF$ and cut $D_1=\sigma_1^{-1}(0)$ at $p$. We denote by $D_1^*=D_1\setminus \mathrm{Sing}(\sigma_1^*(\FF))$ and  $\tilde{h}:\pi_1(D_1^*,p)\rightarrow \mathrm{Diff}(\tilde{T}(\FF,\LL),p)$ the vanishing holonomy representation of $\FF$. We choose a generator $\gamma$ for $\pi_1(D_1^*,p)\cong \Z$. Then $\sigma_1$ induces  
$$h_\gamma=\sigma_1\circ \tilde{h}(\gamma)\circ\sigma_1^{-1}\in\Diff(T(\FF,\LL),0).$$
We call $h_\gamma$ the \emph{holonomy on the tangent curve $T(\FF,\LL)$}. Denote by $\pi_{S_1}$ and $\pi_{S_2}$ the projection by the leaves of $\LL$ from $T(\FF,\LL)$ to $S_1$ and $S_2$ respectively.
\begin{defi}
The sliding of a reduced foliation $\FF$ and a regular foliation $\LL$ on $S_1$ (resp., $S_2$) is the diffeomorphism (figure \ref{figure1})
\begin{align*}
g_{S_1}(\FF,\LL)&= \pi_{S_1*}(h_\gamma)=\pi_{S_1}\circ h_\gamma\circ\pi_{S_1}^{-1}\\
\big(\text{resp.,}\;\;g_{S_2}(\FF,\LL)&= \pi_{S_2*}(h_\gamma)=\pi_{S_2}\circ h_\gamma\circ\pi_{S_2}^{-1}\big).
\end{align*}\end{defi}
Let $d:S_1\rightarrow S_2$ be the Dulac map of $\LL$ (Section \ref{sec1.2}). Since $d=\pi_{S_2}\circ\pi_{S_1}^{-1}$, it is obvious that
\begin{equation}\label{3}
g_{S_2}(\FF,\LL)=d_*\left(g_{S_1}(\FF,\LL)\right).
\end{equation}

Now let $\FF$ be a non-dicritical foliation such that after desingularization by the map $\sigma$ all singularities of $\sigma^*(\FF)=\tF$ are nondegenerate. By Lemma \ref{lem1} there exists a $\sigma$-absolutely dicritical foliation $\LL_0$ such that $\Sep(\tF)\pitchfork\tL_0$. 
\begin{notation}\label{no5}
We denote by $\RR_\FF(\LL_0)$ the set of all $\sigma$-absolutely dicritical foliations $\LL$ satisfying the two following properties:
\begin{itemize}
\item $\tL$ and $\tL_0$ have the same Dulac maps at any corner of $\DD$.
\item At each singularity $p$ of $\tF$, the invariant curves of $\tL$ and $\tL_0$ through $p$ are tangent (figure \ref{p38}).  
\end{itemize} 
\end{notation}

\begin{figure}
\begin{center}
\includegraphics[scale=0.8]{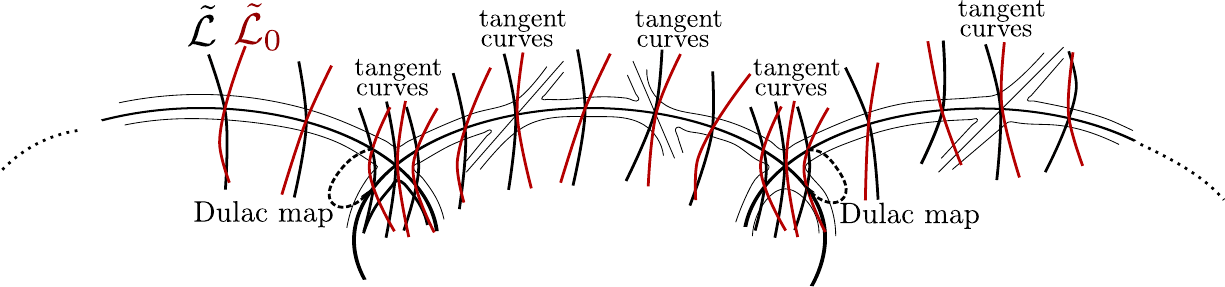}
\caption{Element $\LL$ of $\mathcal{R}_{\FF}(\LL_0)$}\label{p38}
\end{center}
\end{figure}

Let $\LL$ in $\RR_\FF(\LL_0)$ and $D$ be an irreducible component of $\DD$. Suppose that $p_1,\ldots,p_m$ are the singularities of $\tF$ on $D$. Then we denote by 
$$S_D(\tF,\tL)=\{g_{D,p_1}(\tF,\tL),\ldots,g_{D,p_m}(\tF,\tL)\},$$
where $g_{D,p_i}(\tF,\tL)$ is the sliding of $\tF$ and $\tL$ in a neighborhood of $p_i$.
\begin{defi}
The sliding of $\FF$ and $\LL$ is 
\begin{equation*}
S(\FF,\LL)=\cup_{D\in\mathrm{Comp}(\DD)}S_D(\tF,\tL),
\end{equation*}
where $\mathrm{Comp}(\DD)$ is the set of all irreducible components of $\DD$. The set of slidings of $\FF$ relative to direction $\LL_0$ is the set
\begin{equation*}
\mathfrak{S}(\FF,\LL_0)=\cup_{\LL\in\RR_\FF(\LL_0)}S(\FF,\LL).
\end{equation*} 
\end{defi}

We will prove in Corollary \ref{cor3} that $\mathfrak{S}(\FF,\LL_0)$ is an invariant of $\FF$: If $\FF$ and $\FF'$ are conjugated by $\Phi$ then for each $\LL$ in $\RR_\FF(\LL_0)$ we have $S(\FF,\LL)=\tPhi_{|\DD}\circ S(\FF',\Phi_*\LL)\circ\tPhi^{-1}_{|\DD}$ . Under some conditions for $\FF$ and $\FF'$ (Theorem \ref{thr1}), we will have $\tPhi_{|\DD}=\Id$. Therefore $S(\FF,\LL)=S(\FF',\Phi_*\LL)$. Moreover, $\Phi_*\LL$ is also in $\RR_\FF(\LL_0)$. Consequently,  $\mathfrak{S}(\FF,\LL_0)=\mathfrak{S}(\FF',\LL_0)$.

\begin{rema}\label{re7} For each singularity $p$ of $\tF$ that is a corner, i.e., $p=D_i\cap D_j$, there are two slidings $g_{D_i,p}(\tF,\tL)$ and $g_{D_j,p}(\tF,\tL)$. However, by \eqref{3}, $g_{D_j,p}(\tF,\tL)$ is completely determined by $g_{D_i,p}(\tF,\tL)$ and the Dulac map of $\tL$ at $p$.\\

\noindent This invariant is named ``sliding'' because it gives an obstruction for the construction of local conjugacy of two foliations that fixes the points in the exceptional divisor (Corollary \ref{cor3}).\\

\noindent The definition of $\mathfrak{S}(\FF,\LL_0)$ does not depend on choosing a element $\LL_0$ in $\RR_\FF(\LL_0)$. More precisely, if $\LL'_0\in\RR_\FF(\LL_0)$ then  $\LL_0\in\RR_\FF(\LL'_0)$ and $\mathfrak{S}(\FF,\LL_0)=\mathfrak{S}(\FF,\LL'_0)$\\

\noindent Although $S(\FF,\LL)$ is a set of local diffeomorphisms, it is not a local invariant. $S(\FF,\LL)$ also contains the information of the relation of those local diffeomorphisms because  all these local diffeomorphisms are defined by the holonomy projections following the global fibration $\LL$: in some sense, any fibration $\LL\in\RR_\FF(\LL_0)$ plays the role of a global common transversal coordinate on which the slidings invariants are computed all together and at the same time.

\end{rema}

Let us clarify here the role of the sliding invariant in the problem of classification
of germs of foliations. Suppose that two non-dicritical foliations $\FF$ and $\FF'$ satisfy that their separatrices and their vanishing holonomies are conjugated. Moreover, after desingularization, all the Camacho-Sad indices of $\FF$ and $\FF'$ are coincide. Then after blowing-ups, $\FF$ and $\FF'$ are locally conjugated in a neighborhood of their singularities. Although we have the conjugation of their vanishing holonomies, in general, we can not glue the local conjugation together. The obstruction is that these local conjugations induce the local diffeomorphisms on the exceptional divisor which we call the slidings. In general, there is no reason for those slidings  being parts of a global diffeomorphism of the divisor. 

\subsection{Statement of the main results}
Let $\FF, \FF'\in\mathcal{M}$. We say that their \emph{strict separatrices are tangent}, denoted $\Sep(\tF)\para \Sep(\tF')$, if they have the same desingularization map and the same set of singularities. Moreover, at each singularity which is not a corner of the divisor the separatrices of $\tF$ and $\tF'$ are tangent. If $\Sep(\tF)\para \Sep(\tF')$ and $\LL_0$ is an absolutely dicritical foliation satisfying $\Sep(\tF)\pitchfork\tL_0$ then $\Sep(\tF')\pitchfork\tL_0$ and $\RR_\FF(\LL_0)=\RR_{\FF'}(\LL_0)$. We denote by $\mathrm{CS}(\tF)$ the set of Camacho-Sad indices of $\tF$ at all singularities. We also denote by $\mathrm{CS}(\tF)=\mathrm{CS}(\tF')$  if at each singularity, $\tF$ and $\tF'$ have the same Camacho-Sad index.   

\begin{theorem}\label{thr1}
Let $\FF$ and $\FF'$ be two foliations in the class $\mathcal{M}$ (see Notation \ref{no1}) such that $\Sep(\tF)\para\Sep(\tF')$. Suppose that $\LL_0$ is an absolutely dicritical foliation satisfying $\Sep(\tF)\pitchfork\tL_0$. Let $\RR_\FF(\LL_0)$ be as in Notation \ref{no5} and $\mathfrak{S}(\FF,\LL_0)$, $\mathfrak{S}(\FF',\LL_0)$  the corresponding sets of slidings. Then the three following statements are equivalent:
\begin{enumerate}
\item[(i)] $\FF$ and $\FF'$ are strictly analytically conjugated.
\item[(ii)] Their vanishing holonomy representations are strictly analytically conjugated, $\mathrm{CS}(\tF)=\mathrm{CS}(\tF')$ and $\mathfrak{S}(\FF,\LL_0)=\mathfrak{S}(\FF',\LL_0)$.
\item[(iii)] Their vanishing holonomy representations are strictly analytically conjugated, $\mathrm{CS}(\tF)=\mathrm{CS}(\tF')$ and $\mathfrak{S}(\FF,\LL_0)\cap\mathfrak{S}(\FF',\LL_0)\neq\emptyset$.  
\end{enumerate}
\end{theorem}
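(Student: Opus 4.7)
The plan is $(i)\Rightarrow(ii)\Rightarrow(iii)\Rightarrow(i)$, with the last implication being the substantive step. For $(i)\Rightarrow(ii)$, a strict analytic conjugation $\Phi$ lifts via $\sigma$ to a biholomorphism $\tPhi$ of a neighborhood of $\DD$ conjugating $\tF$ to $\tF'$. Tangency to identity of $\Phi$ forces $\tPhi_{|\DD}=\mathrm{Id}$, hence the Camacho--Sad indices match, the vanishing holonomies are strictly conjugated by the restriction of $\tPhi$ to any transverse section, and for every $\LL\in\RR_\FF(\LL_0)$ the push-forward $\Phi_*\LL$ belongs to $\RR_{\FF'}(\LL_0)$ (same Dulac maps and same tangencies at singularities), with $S(\FF',\Phi_*\LL)=\tPhi_{|\DD}\circ S(\FF,\LL)\circ\tPhi_{|\DD}^{-1}=S(\FF,\LL)$, whence $\mathfrak{S}(\FF,\LL_0)=\mathfrak{S}(\FF',\LL_0)$. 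The step $(ii)\Rightarrow(iii)$ is immediate since both sliding sets are non-empty.

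For $(iii)\Rightarrow(i)$, pick $\LL,\LL'\in\RR_\FF(\LL_0)=\RR_{\FF'}(\LL_0)$ whose slidings agree singularity by singularity. The aim is to construct a biholomorphism $\tPhi$ of a neighborhood of $\DD$ with $\tPhi_{|\DD}=\mathrm{Id}$ sending $(\tF,\tL)$ to $(\tF',\tL')$; its blow-down is then the desired $\Phi$. Locally at each $p\in\mathrm{Sing}(\tF)$, the equality of Camacho--Sad indices (irrational, hence non-resonant) combined with the tangency of the separatrices of $\tF,\tF'$ and of the invariant curves of $\tL,\tL'$, fed into the Mattei--Moussu local classification, provides a germ $\psi_p$ conjugating $(\tF,\tL)$ to $(\tF',\tL')$ at $p$. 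Globally, on the regular part $D^*$ of each non-dicritical component $D$ of $\DD$, the transverse fibrations $\tL,\tL'$ endow a tubular neighborhood of $D^*$ with adapted coordinates in which the strict conjugation of the vanishing holonomies on $D$ yields a conjugation between $\tF$ and $\tF'$ by parallel transport along the leaves of $\tL,\tL'$.

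It remains to glue the germs $\psi_p$ with the holonomy-induced extensions. Comparing the two candidate conjugations along a small loop encircling $p\in D$, the obstruction, once projected through the leaves of $\tL$ onto the separatrix of $\tF$ in $D$, is exactly $g_{D,p}(\tF,\tL)\circ g_{D,p}(\tF',\tL')^{-1}$; this is the identity by the choice of $\LL,\LL'$, so the pieces glue near every singularity on $D$. At a corner $p=D_i\cap D_j$, the extensions from the $D_i$- and $D_j$-sides agree because the Dulac maps of $\tL,\tL'$ coincide at $p$ (both equal that of $\tL_0$), and formula \eqref{3} automatically transfers the sliding matching from one side to the other. Assembling these pieces yields $\tPhi$, which descends to the sought strict conjugation $\Phi$.

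The principal technical hurdle is precisely this gluing: verifying that the sliding $g_{D,p}$, defined through the holonomy of $\tF$ on the tangent curve $T(\tF,\tL)$ projected via the leaves of $\tL$ onto the separatrix, is the \emph{only} obstruction to matching $\psi_p$ with the tubular extension along $D^*$. This is where the design of the invariant through a global fibration $\LL$, emphasized in Remark \ref{re7}, becomes essential: it is only with this global common transverse coordinate that all the local obstructions assemble into a single sliding set that is exactly what must agree.
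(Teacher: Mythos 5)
Your overall architecture matches the paper's ((i)$\Rightarrow$(ii) by lifting $\Phi$ to a map fixing $\DD$ pointwise; (iii)$\Rightarrow$(i) by gluing local conjugacies of the pairs $(\tF,\tL)$, $(\tF',\tL')$ to a holonomy-induced conjugacy over the regular part of $\DD$), but two steps you treat as automatic are exactly where the paper has to work, and as stated they are gaps. First, ``tangency to identity of $\Phi$ forces $\tPhi_{|\DD}=\Id$'' is false for a general composition of blow-ups: for instance $\Phi(x,y)=(x+x^2,y)$ is tangent to the identity, yet its lift after one blow-up has differential $\bigl(\begin{smallmatrix}1&0\\ -\by_0&1\end{smallmatrix}\bigr)$ at the point $(0,\by_0)$ of the first component, a nontrivial Jordan block, so a further blow-up centered at such a point would not fix the new component pointwise. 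The paper's Lemma \ref{lem9} proves $\tPhi_{|\DD}=\Id$ by induction on the blow-ups, using that foliations in $\MM$ admit no dead branches to produce, at each center of blow-up, a separatrix direction transverse to the divisor; together with $\Sep(\tF)\para\Sep(\tF')$ this gives $D\Phi_1(p)$ two independent eigenvectors with eigenvalue $1$, hence $D\Phi_1(p)=\Id$, and the induction proceeds. You need this argument, not just tangency to identity at the origin.

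Second, in the gluing step your claim that the obstruction is ``exactly $g_{D,p}(\tF,\tL)\circ g_{D,p}(\tF',\tL')^{-1}$'' and that its vanishing suffices is not correct. Equality of the slidings gives you (via the converse part of Corollary \ref{cor3}) a local conjugacy $\Phi_i$ of the pairs whose restriction to $\DD$ is the identity; but the path-lifted conjugacy $\Phi_0$ over the regular part also restricts to the identity on $\DD$, and the two can still differ by a nontrivial symmetry $\Psi_i=\Phi_i^{-1}\circ\Phi_0$ of $(\tF,\tL)$ fixing $\DD$ pointwise. Killing $\Psi_i$ is precisely where the hypothesis $\lambda\notin\Q$ enters: the paper checks $v(\Psi_i)\equiv 1$ (by holonomy transport along the leaves of $\tL$, plus the three-eigenvector argument giving $D\Phi_i(p_i)=\Id$) and then applies Lemma \ref{lem10}, which says a tangent-to-identity diffeomorphism commuting with a diffeomorphism of linear part $e^{2\pi i\lambda}$, $\lambda\notin\Q$, is the identity. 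Your proposal never uses the irrationality of the Camacho--Sad indices in the gluing, so the pieces you construct need not agree on overlaps. Relatedly, the local conjugacy of the pairs at $p$ with prescribed boundary value on $\DD$ should come from Proposition \ref{pro8} and Corollary \ref{cor3} (existence \emph{and uniqueness} given a conjugacy of the holonomies on the tangent curves $T(\FF,\LL)$), not merely from equality of Camacho--Sad indices and tangency of separatrices, which do not by themselves determine the local analytic class of the pair.
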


Here, a strict conjugacy means a conjugacy tangent to identity. We will prove that  the slidings of foliations are finitely determined:
\begin{theorem}\label{thr2}
Let $\FF$ be a non-dicritical foliation without saddle-node singularities after desingularization. There exists a natural $N$ such that if there is a non-dicritical foliation $\FF'$ satisfying the following conditions: 
\begin{enumerate}
\item[(i)] $\FF$ and $\FF'$ have the same set of singularities after desingularization and at a neighborhood of each singularity, $\tF$ and $\tF'$ are locally strictly analytically conjugated,
\item[(ii)] There exist $\LL,\LL'$ in $\RR_\FF(\LL_0)$ such that $J^N(S(\FF,\LL))=J^N(S(\FF',\LL'))$,
\end{enumerate}
then there exists $\LL''$ such that $\LL''$ is strictly conjugated with $\LL$ and $S(\FF,\LL'')=S(\FF',\LL')$.
\end{theorem}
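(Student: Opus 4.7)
The plan is to reduce Theorem \ref{thr2} to a gluing problem on the exceptional divisor $\DD$ whose obstruction space is finite dimensional, and to extract finite determination from that finite dimensionality.

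First I would set up local data at each singularity $p$ of $\tF$. Hypothesis (i) supplies a germ of tangent-to-identity diffeomorphism $\phi_p$ with $\phi_p^*\tF'=\tF$ on a neighborhood of $p$; since $\phi_p$ is tangent to the identity it preserves setwise each local component of $\DD$ through $p$. Set $\tL_p:=\phi_p^*\tL'$; this is a regular fibration transverse to $\DD$ near $p$, and it has the same tangent direction at $p$ and the same local Dulac maps as $\tL$ because both $\LL$ and $\LL'$ lie in $\RR_\FF(\LL_0)$. The problem becomes: find a global strict self-conjugacy of $\FF$ whose pullback of $\tL$ agrees, near each $p$, with $\tL_p$ up to a modification that does not alter the sliding.

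Second, I would use local normal forms. Since no saddle-nodes occur after desingularization, each $\tF$ at $p$ admits the Mattei--Moussu normal form $\lambda y(1+A(x,y))dx+xdy$; the admissible local fibrations $\tL$ with prescribed tangent direction are then parametrized, modulo tangent-to-identity self-conjugacies of $\tF$, by a finite-dimensional slice of analytic coefficients, and each sliding $g_{D,p}(\tF,\tL)$ depends on these coefficients through an explicit analytic expression built from the vanishing holonomy of $\tF$ and the Dulac projection following $\tL$. Consequently there is a local integer $N_p$ such that equality of the $N_p$-jets of the slidings at $p$ already forces the existence of a tangent-to-identity germ $\psi_p$, commuting with $\tF$, for which $\psi_p^*\tL_p$ has exactly the same sliding as $\tL$ on every adjacent component of $\DD$.

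Third, I would globalize by gluing. The local strict automorphisms $\phi_p\circ\psi_p$ of $\tF$ need to be assembled into a global strict self-conjugacy of $\FF$, so that its pullback transports $\tL'$ to an $\LL''$ strictly conjugate to $\LL$ with $S(\FF,\LL'')=S(\FF',\LL')$. This is a Cousin-type problem along $\DD$, and its obstruction lies in the first cohomology of the sheaf of $\tF$-preserving, tangent-to-identity diffeomorphisms on a neighborhood of $\DD$. Because $\FF\in\mathcal{M}$ has no dead branch and all its desingularized singularities are nondegenerate with non-rational Camacho--Sad indices, this cohomology is finite dimensional, along the lines of Mattei's isoholonomic deformation analysis. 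Taking $N$ larger than $\max_p N_p$ plus the dimension of this obstruction space provides the universal $N$ of the theorem. The principal obstacle will be precisely this last step: one must control the global obstruction and show that the $N$-jet condition on the slidings kills the Čech cocycle assembled from the local corrections $\phi_p\circ\psi_p$, which is where the bulk of the technical work lies.
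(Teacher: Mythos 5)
Your local step is essentially the paper's: hypothesis (i) together with Corollaries \ref{cor11} and \ref{cor3} yields a tangent-to-identity $\psi_i\in\Diff(D,p_i)$ conjugating $g_{D,p_i}(\tF,\tL)$ and $g_{D,p_i}(\tF',\tL')$, and the jet hypothesis (ii) makes the induced map $\phi_i=\pi_D^{-1}\circ\psi_i^{-1}\circ\pi_D$ on the branch $T_i$ of the tangency curve tangent to the identity to order $N$. The globalization, however, aims at the wrong object. You propose to assemble the local maps $\phi_p\circ\psi_p$ into a global strict self-conjugacy of $\FF$ (or, on the other reading, a global conjugacy of $\FF$ with $\FF'$). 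The second is unreachable: nothing is assumed about the vanishing holonomy representations, and the impossibility of gluing local conjugacies of $\tF$ and $\tF'$ is precisely what the sliding invariant measures (Theorem \ref{thr1}); Theorem \ref{thr2} deliberately claims much less. The first is useless: by Corollary \ref{cor3}, a self-conjugacy of $\FF$ whose lift fixes $\DD$ pointwise leaves every sliding unchanged, so no such map can turn $S(\FF,\LL)$ into $S(\FF',\LL')$. What is actually needed is a diffeomorphism that moves the fibration $\LL$ \emph{relative to} $\FF$ while fixing $\DD$ pointwise, preserving the tangency curve $T(\FF,\LL)=\{q=0\}$ setwise, and inducing the prescribed $\phi_i$ on each branch $T_i$. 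The paper produces it as $\Phi=\exp[(f-uq)]X_q$, the flow of the vector field $X_q$ tangent to $T$, and Lemma \ref{lem17} then gives $S(\FF,\Phi_*\LL)=S(\FF',\tL')$ because the holonomy $h_{i\gamma}$ on $T_i$ is untouched and only the projections $\pi_D$ change by $\psi_i$. (You also invoke non-rational Camacho--Sad indices and absence of dead branches; these are hypotheses of Theorem \ref{thr1} via $\MM$, not of Theorem \ref{thr2}.)

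The second gap is the origin of $N$. Finite dimensionality of a cohomological obstruction space, even granted, gives no mechanism by which agreement of $N$-jets of the slidings kills a \v{C}ech cocycle, and you acknowledge that this is where the work would lie. In the paper the constant is concrete: Proposition \ref{pro11} is an interpolation statement showing that functions prescribed on the branches $\tS_i$ and vanishing to order at least $N$ are simultaneously the restrictions of a single $F\in\C\{x,y\}$, with $N$ bounded uniformly over the equisingularity class by intersection numbers $\mathrm{I}(g_i,g_j)$; this is what converts the local time functions $f_i$ (obtained on each $T_i$ by the implicit function theorem) into one global $f$ with $\exp[f\circ\sigma]\tilde X_q|_{T_i}=\phi_i$. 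A final Nullstellensatz correction $f\mapsto f-uq$ (Lemma \ref{lemma18}) restores $T(\FF,\Phi_*\LL)=T(\FF,\LL)$ without changing $\Phi|_{T_i}$. Without an analogue of this interpolation step your outline neither produces a value of $N$ nor shows that any value suffices.
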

Here $J^N(S(\FF,\LL))=J^N(S(\FF',\LL'))$ means $J^{N}(g_{D,p}(\tF,\tL))=J^{N}(g_{D,p}(\tF',\tL'))$ for all $g_{D,p}(\tF,\tL)$ in $S(\FF,\LL)$, $g_{D,p}(\tF',\tL')$ in $S(\FF',\LL')$, where $J^N(g_{D,p}(\tF,\tL))$ stands for the regular part of degree $N$ in the Taylor expansion of $g_{D,p}(\tF,\tL)$. \\  

These two theorems also give two corollaries on finite determination of the class of isoholonomic non-dicritical foliations and absolutely dicritical foliations that have the same Dulac maps (see Corollary \ref{cor17} and \ref{cor19}). \\  
 
This paper is organized as follows: In section 2, local conjugacy of the pair $(\FF,\LL)$  will be proved. We prove Theorem \ref{thr1} in Section 3. Section 4 is devoted to prove Theorem \ref{thr2} and two Corollaries of finite determination of class of isoholonomic non-dicritical foliations and absolutely dicritical foliations that have the same Dulac maps.


\section{Local conjugacy of the pair $(\FF,\LL)$}
Let $\FF$, $\FF'$ be two germs of nondegenerate reduced foliations in $(\C^2,0)$. Denote by $S_1$, $S_2$ and $S'_1$, $S'_2$ the separatrices of $\FF$ and $\FF'$ respectively. Let $\LL$ and $\LL'$ be two germs of regular foliations such that their separatrices $L$ and $L'$ are transverse to the two separatrices of  $\FF$ and $\FF'$ respectively. Suppose that $\Phi$ is a diffeomorphism conjugating $(\FF,\LL)$ and $(\FF',\LL')$, then the restriction of $\Phi$ on the tangent curves commutes with the holonomies on $T(\FF,\LL)$ and $T(\FF',\LL')$ of $\FF$ and $\FF'$. The converse is also true: 

\begin{prop}\label{pro8}
Suppose that $\FF$ and $\FF'$ have the same Camacho-Sad index. If $\phi:T(\FF,\LL)\rightarrow T(\FF',\LL')$ is a diffeomorphism commuting with the holonomies of $\FF$ and $\FF'$ then $\phi$ extends to a diffeomorphism $\Phi$ of $(\C^2,0)$ sending $(\FF,\LL)$ to $(\FF',\LL')$.  Moreover,  if we require that $\Phi$ sends $S_1$ (resp. $S_2$) to $S'_1$ (resp. $S'_2$) then this extension is unique.
\end{prop}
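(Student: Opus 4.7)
I would first choose local coordinates $(u,v)$ on the source adapted to $\LL$: set $\LL=\{dv=0\}$ with $L=\{v=0\}$; analogously choose $(u',v')$ on the target with $\LL'=\{dv'=0\}$ and $L'=\{v'=0\}$. Write $\FF = P\,du+Q\,dv$ and $\FF' = P'\,du'+Q'\,dv'$. By Lemma~\ref{lem2} the tangent curve is a smooth graph $T=\{u=\tau(v)\}$ contained in $\{P=0\}$, and similarly $T'=\{u'=\tau'(v')\}$. Any diffeomorphism $\Phi$ carrying $\LL$ to $\LL'$ must preserve the fibration $\{v=\mathrm{const}\}$, so it takes the form $\Phi(u,v)=(\Phi_1(u,v),\psi(v))$; moreover $\psi$ is forced by the boundary data, since for each $v_0$ the point $(\tau(v_0),v_0)\in T$ must map to a point of $T'$, which determines $\psi(v_0)$ as the second coordinate of $\phi(\tau(v_0),v_0)$. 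The whole problem reduces to constructing the first coordinate $\Phi_1$.

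\noindent\textbf{Characteristic ODE and monodromy.} Imposing $\Phi^*\FF'=f\cdot \FF$ for some non-vanishing function $f$ and eliminating $f$ yields the quasilinear PDE
$$\partial_v\Phi_1 - (Q/P)(u,v)\,\partial_u\Phi_1 \;=\; -\psi'(v)\,(Q'/P')\bigl(\Phi_1,\psi(v)\bigr),$$
whose characteristic curves in the source are exactly the leaves of $\FF$, parameterized by $v$. Integrating the induced ODE along each $\FF$-leaf with initial condition $\Phi_1(\tau(v),v)=\phi_1(\tau(v),v)$ read off from $\phi$ on $T$ produces a holomorphic candidate for $\Phi_1$ on the complement of the separatrices in a neighborhood of $0$. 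The construction is a priori multivalued because after one blow-up the leaves of $\sigma_1^*\FF$ wind around $D_1$; the monodromy acquired by one loop is exactly $\phi^{-1}\circ h'_\gamma\circ \phi \circ h_\gamma^{-1}$ on $T$, which equals the identity by the hypothesis $\phi\circ h_\gamma = h'_\gamma\circ\phi$. Hence $\Phi_1$ is single-valued on the punctured neighborhood.

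\noindent\textbf{Extension, uniqueness, and the main obstacle.} To finish the construction I would extend $\Phi$ across $S_1$, $S_2$, $L$, and the origin by a Mattei--Moussu style analysis at each reduced singular point after one blow-up: the equality of Camacho--Sad indices makes the leafwise ODE have matching asymptotics along the separatrices, so $\Phi_1$ has removable singularities there and extends to a holomorphic germ at $0$. For uniqueness under the additional constraint $\Phi(S_i)=S_i'$, any two extensions $\Phi^{(1)},\Phi^{(2)}$ differ by $\Psi:=(\Phi^{(2)})^{-1}\circ\Phi^{(1)}$, which preserves $(\FF,\LL)$, fixes $S_1,S_2$ setwise, and restricts to the identity on $T$; preservation of $\LL$ gives $\Psi(u,v)=(\Psi_1(u,v),v)$, and the same characteristic-ODE argument with identity initial data on $T$ forces $\Psi_1(u,v)=u$, whence $\Psi=\mathrm{Id}$. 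The main obstacle is precisely the extension step: the monodromy cancellation only yields single-valuedness in the punctured neighborhood, and the genuine analytic input is the asymptotic comparison at the separatrices, which uses the equality of Camacho--Sad indices together with the transversality of $\LL$, $\LL'$ to the separatrices, rather than merely the existence of a formal local conjugacy.
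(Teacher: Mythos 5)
Your strategy---build $\Phi$ leafwise along $\FF$, using $\LL$ and $\LL'$ as the transversal fibrations and the holonomy commutation for single-valuedness---is a legitimate alternative skeleton to the paper's argument (the paper instead first conjugates $(\FF,\RR)$ to $(\FF',\RR')$ for auxiliary \emph{radial} foliations by Mattei--Moussu path lifting, and only then corrects the fibration $\Phi_{1*}^{-1}\LL'$ to $\LL$ by a flow $\exp[\tau^{n-1}\alpha]X$ of a vector field $X$ tangent to $\FF$). But your write-up has a genuine gap exactly where the paper does its real work, namely at the tangency curve $T$. Your characteristic equation has coefficient $Q/P$ with $\{P=0\}=T$, and the leaves of $\FF$ are tangent to the fibres $\{v=\mathrm{const}\}$ precisely at their intersections with $T$; so $v$ fails to be a coordinate on a leaf at the initial point, and the Cauchy data is prescribed at a branch point of the parameterization and a pole of the equation. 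Existence of a holomorphic solution there is not an application of the standard ODE theorem (one must exploit that the tangency is quadratic and choose branches consistently), and---more seriously---your uniqueness step (``the same characteristic-ODE argument with identity initial data forces $\Psi_1=u$'') invokes uniqueness of solutions at a singular point of the ODE, where it can fail. This is precisely why the paper proves that $\LL$ and $\LL_0$ have contact of order $n\ge 2$ along $T$ (using $\lambda\neq 0,-1$), and why its uniqueness proof does not use an ODE argument at all but writes $\Psi=\exp[\beta]X$ via Berthier--Cerveau--Meziani and kills $\beta$ degree by degree.

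A second gap is the extension across $S_1$, $S_2$ and the origin, which you assert rather than prove. You cannot quote Mattei--Moussu directly here, because their extension theorem is for a fibration transverse to $\FF$, whereas $\LL$ is tangent to $\FF$ along $T$, a curve passing through the singular point. The paper sidesteps this by performing the path lifting with respect to radial foliations $\RR,\RR'$ admitting $S_1,S_2,L,T$ and $S_1',S_2',L',T'$ as invariant curves; the hypothesis $\mathrm{CS}(\FF)=\mathrm{CS}(\FF')$ is used there not for ``asymptotics'' but to guarantee, via Lemma \ref{lem2}, that the automorphism of $\CP$ matching the directions of $S_1,S_2,L$ with those of $S_1',S_2',L'$ automatically matches the directions of $T$ and $T'$ (equivalently, the cross-ratios of the two quadruples of tangent directions agree if and only if $\lambda=\lambda'$). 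Any construction of $\Phi$, including yours, must produce a differential at $0$ sending one quadruple of directions to the other, so this is where the Camacho--Sad hypothesis genuinely enters; your proposal never addresses it.
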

\begin{proof}
By Lemma \ref{lem1}, the curves $S_1$, $S_2$, $L$, $T(\FF,\LL)$ (resp., $S'_1$, $S'_2$, $L'$, $T(\FF',\LL')$) are four transverse smooth curves. It is well known that there exist two radial foliations $\RR$ and $\RR'$ such that $S_1$, $S_2$, $L$, $T(\FF,\LL)$ and $S'_1$, $S'_2$, $L'$, $T(\FF',\LL')$ are the invariant curves of $\RR$ and $\RR'$ respectively. After a blowing-up at the origin, denote by $p_1$, $p_2$, $p_L$, $p_T$ (resp., $p'_1$, $p'_2$, $p'_L$, $p'_T$) the intersections of strict transforms of $S_1$, $S_2$, $L$, $T(\FF,\LL)$ (resp., $S'_1$, $S'_2$, $L'$, $T(\FF',\LL')$) with $\CP$. Take $\phi_1$ in $\mathrm{Aut}(\CP)$ that sends $p_1$, $p_2$, $p_L$ to $p'_1$, $p'_2$, $p'_L$ respectively. By Lemma \ref{lem1} , the direction of $T(\FF,\LL)$ (resp., $T(\FF',\LL')$) is completely determined by the Camacho-Sad index and the direction of $L$ (resp., $L'$). Therefore, $\phi_1(p_T)=p'_T$. Using the path lifting method after a blowing-up \cite{Mat-Mou}, $\phi$ extends to a diffeomorphism $\Phi_1$ of $(\C^2,0)$ sending $(\FF,\RR)$ to $(\FF',\RR')$.

Denote by $\LL_0=\Phi_{1*}^{-1}(\LL')$. Because 
$\Phi_1^{-1}$ sends $L'$ and $T(\FF',\LL')$ to $L$ and $T(\FF,\LL)$ respectively, $L$ is also the separatrix of  $\LL_0$ and $T(\FF,\LL_0)=T(\FF,\LL)$. We denote by $T$ the tangent curve  $T(\FF,\LL_0)$. The proof is reduced to show that there exists a diffeomorphism fixing points in $T$ sending $(\FF,\LL)$ to $(\FF,\LL_0)$. Choose a system of coordinates $(x,y)$ such that $\LL_0$ is defined by $f_0=x+y$ and $\FF$ is defined by a $1$-form
$$\omega(x,y)=\lambda y(1+A(x,y))dx+xdy,\; \lambda\not\in\Q_{\leq 0}.$$
Then $T$ is defined by 
$$\tau(x,y)=x-\lambda y(1+A(x,y))=0.$$
We claim that there exist a natural $n\geq 2$ and a holomorphic function $h$ such that $\LL$ is defined by
$$f(x,y)=\left(1+\tau^n(x,y)h(x,y)\right)(x+y).$$
Indeed, assume that $\LL$ is defined by 
$$\bar{f}(x,y)=u(x,y)(x+y),$$
where $u$ is invertible. Rewrite the equation of $T$ as 
$$x-\bar\tau(y)=0,$$  
where $\bar\tau(y)=\lambda y + \ldots$. Because $\lambda\neq -1$, the maps $u(\bar\tau(y),y).(\bar\tau(y)+y)$ and $\bar\tau(y)+y$ are  diffeomorphic. Hence there exists a diffeomorphism $g\in\C\{y\}$  such that
$$g\Bigl(u(\bar\tau(y),y).(\bar\tau(y)+y)\Bigr)=\bar\tau(y)+y.$$
This is equivalent to
\begin{equation*}
\bigl(g\circ \bar{f}-(x+y)\bigr)_{| \tau=0}=0.
\end{equation*}
Therefore, there exist a natural $n\geq 1$ and a function $h$ satisfying $h_{|\tau=0}\not\equiv 0$ such that
$$g\circ \bar{f}(x,y)=\left(1+\tau^n(x,y)h(x,y)\right)(x+y).$$
Because $g$ is a diffeomorphism, $\LL$ is also defined by $f=g\circ \bar{f}$. 

Let us prove $n\geq 2$. We have
\begin{align*}
df\wedge \omega&=\tau(x,y)(\ldots)+ n(x+y)h(x,y)\tau^{n-1}d\tau\wedge\omega  \\
& =\tau(x,y)(\ldots)+ n(x+y)h(x,y)\tau^{n-1}(x+\lambda^2y+\ldots)dx\wedge dy.
\end{align*}
Because $T$ is defined by $\tau(x,y)=0$, we have
\begin{equation*}
 n(x+y)h(x,y)\tau^{n-1}(x+\lambda^2y+\ldots)_{|\tau=0}\equiv 0
\end{equation*}
The fact $\lambda\neq 0,-1$ forces to $x+\lambda^2y\neq x-\lambda y$ and $x+y\neq x-\lambda y$. This implies 
$(\tau^{n-1})_{|\tau=0}\equiv 0$. Consequently, $n\geq 2$.

Now let
$$X=x\ddx- \lambda y(1+A(x,y)) \ddy$$
tangent to $\FF$. Now we will show that there exists $\alpha\in\C\{x,y\}$ such that the diffeomorphism 
$\exp[\tau^{n-1}\alpha]X$ satisfies 
\begin{equation}\label{eq1}
(x+y)\circ\exp[\tau^{n-1}\alpha]X(x,y)=\sum_{i\geq 0}\frac{\tau^{i(n-1)}\alpha^i}{i!}\mathrm{ad}^i_{X}(x+y)=f(x,y),
\end{equation}
where $\mathrm{ad_X}$ is the adjoint representation. Since $$\sum_{i\geq 0}\frac{\tau^{i(n-1)}\alpha^i}{i!}\mathrm{ad}^i_{X}(x+y)=x+y+\tau^n\alpha + \frac{n}{2}\tau^{2n-2}\alpha^2X(\tau)+\tau^{2n-1}(\ldots),$$
\eqref{eq1} becomes
$$\alpha + \frac{n}{2}\tau^{n-2}\alpha^2X(\tau)+\tau^{n-1}(\ldots)=(x+y)h(x,y).$$
Hence, the existence of $\alpha$ comes from the implicit function theorem.\\

Now we will prove the uniqueness of $\Phi$. In fact, we only need to show that if there exists a diffeomorphism $\Psi$ that sends $(\FF,\LL_0)$ to itself, preserves the two separatrices of $\FF$ and fixes the points of $T$ then $\Psi=\mathrm{Id}$. Since $\Psi_{|T}=\mathrm{Id}$, $\Psi$ sends every leaf of $\FF$ into itself. By \cite{Ber-Cer-Mez}, there exists $\beta\in\C\{x,y\}$ such that
$$\Psi=\exp[\beta]X.$$ 
Because $\LL_0$ is defined by the function $x+y$ and $\Psi$ fixes points in $T$, we get
\begin{equation}\label{eq2}
(x+y)\circ\exp[\beta]X=x+y.
\end{equation}
Decompose $\beta$ into the homogeneous terms
$$\beta=\beta_0+\beta_1+\beta_2+\ldots=\beta_0+\bar{\beta}.$$
Since $\mathrm{ad}^i_X(x)=x$ and $\mathrm{ad}^i_X(y)=(-\lambda)^iy+c_i$ for all $i$, where $c_i\in(x,y)^2$, we have
\begin{align*}(x+y)\circ\exp[\beta]X&=\sum_{i=0}^{\infty}\frac{\beta_0^i}{i!}x+\sum_{i=0}^{\infty}\frac{\beta_0^i}{i!}((-\lambda)^iy)+h.o.t.\\
&=\exp(\beta_0)x+\exp(-\lambda\beta_0)y+h.o.t..
\end{align*}
So \eqref{eq2} leads to
$$\exp(\beta_0)=\exp(-\lambda\beta_0)=1.$$
Hence, 
\begin{align}
x\circ\exp[\beta_0]X&=\sum_{i=0}^{\infty}\frac{\beta_0^i}{i!}x=\exp(\beta_0) x=x, \label{5}\\
y\circ\exp[\beta_0]X&=\sum_{i=0}^{\infty}\frac{\beta_0^i}{i!}\left((-\lambda)^i y+c_i\right)=\exp(-\lambda \beta_0) y+c=y+c,\label{6}
\end{align}
where $c\in(x,y)^2$. We claim that 
\begin{equation}\label{7}
\exp[\beta]X=\exp[\beta_0]X\circ\exp[\bar\beta]X.
\end{equation}
Indeed, for any $h\in\C\{x,y\}$ we have
\begin{align*}
h\circ\exp[\beta_0]X\circ\exp[\bar\beta]X &=\left(\sum_{i=0}^{\infty}\frac{\beta_0^i}{i!}\mathrm{ad}^i_X(h)\right)\circ\exp[\bar\beta]X
=\sum_{j=0}^{\infty}\frac{\bar\beta^j}{j!}\mathrm{ad}^j_X\left(\sum_{i=0}^{\infty}\frac{\beta_0^i}{i!}\mathrm{ad}^i_X(h)\right)\\
&=\sum_{k=0}^{\infty}\sum_{i+j=k}\frac{\bar\beta^j \beta_0^i}{j!i!}\mathrm{ad}^k_X(h)=\sum_{k=0}^{\infty} \frac{(\bar\beta+\beta_0)^k}{k!}\mathrm{ad}^k_X(h)=h\circ\exp[\beta]X.
\end{align*}
We write $\mathrm{ad}^i_X(y+c)=(-\lambda)^iy+d_i$ where $d_i\in(x,y)^2$. By \eqref{5}, \eqref{6}, \eqref{7} we get
\begin{align}
(x+y)\circ\exp[\beta]X&=x\circ\exp[\beta_0]X\circ \exp[\bar\beta]X+ y\circ\exp[\beta_0]X\circ\exp[\bar\beta]X\nonumber\\
&=x\circ\exp[\bar\beta]X+ (y+c)\circ\exp[\bar\beta]X\nonumber\\
&=\sum_{i=0}^{\infty}\frac{\bar\beta^i}{i!}x+\sum_{i=0}^{\infty}\frac{\bar\beta^i}{i!}\mathrm{ad}^i_X(y+c)\nonumber\\
&= \exp(\bar\beta)x+\exp(-\lambda\bar{\beta})y+ \sum_{i=0}^{\infty}\frac{\bar\beta^i}{i!}d_i\nonumber\\
&=x\prod_{i=1}^{\infty}\exp(\beta_i)+ y\prod_{i=1}^{\infty}\exp(-\lambda\beta_i)+ \sum_{i=0}^{\infty}\frac{\bar\beta^i}{i!}d_i.\label{8}
\end{align}
We will prove $\bar{\beta}=0$ by induction. From \eqref{8} we have
$$(x+y)\circ\exp[\beta]X=x(1+\beta_1)+ y(1-\lambda\beta_1)+h.o.t.$$
So \eqref{eq2} forces  $\beta_1=0$. Suppose that $\beta_1=\ldots=\beta_{k-1}=0$, we have
$$(x+y)\circ\exp[\beta]X=x(1+\beta_k)+ y(1-\lambda\beta_k)+h.o.t..$$
Then \eqref{eq2} again leads to $\beta_k=0$ and consequently $\beta=\beta_0$. This implies that
$$\Psi=\exp[\beta_0]X=(x,y+c).$$
Finally,  \eqref{eq2} again gives $c=0$. So $\Psi=\mathrm{Id}$.
\end{proof}
\begin{coro}\label{cor11}
Suppose that $\FF$ and $\FF'$ are two nondegenerate reduced foliations that are analytically conjugated. Let $\LL$ and $\LL'$ be two regular foliations that are transverse to the two separatrices of $\FF$ and $\FF'$ respectively. Then there exists a diffeomorphism that sends $(\FF,\LL)$ to $(\FF',\LL')$. 
\end{coro}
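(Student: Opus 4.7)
The plan is to reduce the statement to Proposition~\ref{pro8}. First, I would use the analytic conjugation $\Psi : (\C^2,0)\to(\C^2,0)$ between $\FF$ and $\FF'$ to transport $\LL$ forward, setting $\LL'' = \Psi_* \LL$. This is a germ of regular foliation whose separatrix $\Psi(L)$ is transverse to the two separatrices $\Psi(S_1),\Psi(S_2)$ of $\FF'$, and $\Psi$ identifies $(\FF,\LL)$ with $(\FF',\LL'')$ as pairs. The problem therefore reduces to exhibiting a self-conjugation of $\FF'$ carrying $\LL''$ onto $\LL'$: if such a $\Phi'$ exists, then $\Phi'\circ\Psi$ is the diffeomorphism we want. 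Writing $\FF$ for $\FF'$ again, the remaining task is to conjugate $(\FF,\LL_1)$ to $(\FF,\LL_2)$ where $\LL_1,\LL_2$ are two regular foliations both transverse to the separatrices of $\FF$.

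In this reduced setting the hypothesis on the Camacho-Sad index in Proposition~\ref{pro8} is trivially satisfied, and it suffices to produce a germ of diffeomorphism $\phi : T(\FF,\LL_1) \to T(\FF,\LL_2)$ that intertwines the two holonomies on the tangent curves. To build $\phi$, I would blow up the origin by $\sigma_1$. The strict transforms $\tilde T(\FF,\LL_i)$ are smooth curves transverse to $D_1=\sigma_1^{-1}(0)$ at points $p_1,p_2\in D_1^*$, where $D_1^* = D_1\setminus\{p_{S_1},p_{S_2}\}\cong\C^*$. Pick any path $\alpha$ in $D_1^*$ from $p_1$ to $p_2$, and apply the path-lifting technique of Mattei--Moussu \cite{Mat-Mou}: leaves of $\tilde\FF$ give a germ of holomorphic diffeomorphism $\tilde\phi : (\tilde T(\FF,\LL_1),p_1) \to (\tilde T(\FF,\LL_2),p_2)$ which by construction conjugates the holonomy $\tilde h_{\alpha\gamma\alpha^{-1}}$ to $\tilde h'_\gamma$. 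Since $\pi_1(D_1^*)\cong\Z$ is abelian, $\alpha\gamma\alpha^{-1}$ is homotopic to $\gamma$, so $\tilde\phi$ intertwines $\tilde h_\gamma$ and $\tilde h'_\gamma$. Blowing down via $\sigma_1$ turns $\tilde\phi$ into the required $\phi$ between the tangent curves in $(\C^2,0)$.

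With $\phi$ in hand, Proposition~\ref{pro8} supplies a diffeomorphism $\Phi'$ of $(\C^2,0)$ extending $\phi$ and sending $(\FF,\LL_1)$ to $(\FF,\LL_2)$; composing with $\Psi$ concludes the proof. The only real point requiring care is the construction of $\phi$: one must check that shifting transversals along the divisor really intertwines the holonomies. This relies on the abelianness of $\pi_1(D_1^*)$, which makes path-lifting independent, up to the relation we need, of the chosen path $\alpha$. Everything else is a direct application of results already established in the paper.
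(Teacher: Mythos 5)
Your proposal is correct and follows essentially the same route as the paper: the paper likewise restricts the given conjugacy $\Psi$ to the tangent curve, uses holonomy transport along the exceptional divisor to pass from $\Psi(T(\FF,\LL))$ to $T(\FF',\LL')$, and then invokes Proposition~\ref{pro8}. Your reorganization (pushing $\LL$ forward by $\Psi$ first and then comparing two fibrations for the same foliation) is an equivalent packaging of the identical argument, and your extra care about the abelianness of $\pi_1(D_1^*)$ just makes explicit the ``holonomy transport'' step the paper states without detail.
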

\begin{proof}
Let $\Psi$ be the conjugacy of $\FF$ and $\FF'$. Denote by $T'=\Psi(T(\FF,\LL))$. Then the restriction $\Psi_{|T(\FF,\LL)}$ commutes with the holonomies of $\FF$ on $T(\FF,\LL)$ and $\FF'$ on $T'$. Moreover by the holonomy transport, the holonomies of $\FF'$ on $T'$ and on $T(\FF',\LL')$ are conjugated. Hence, the holonomies of $\FF$ on $T(\FF,\LL)$ and $\FF'$ on $T(\FF',\LL')$ are conjugated. By Proposition \ref{pro8} there exists a diffeomorphism that sends $(\FF,\LL)$ to $(\FF',\LL')$
\end{proof}

By projecting  on $S_1$ and $S_2$ the  holonomies  defined on $T(\FF,\LL)$ and $T(\FF',\LL')$ respectively, we can obtain
\begin{coro}\label{cor3}
If $\Phi$ is a diffeomorphism conjugating $(\FF,\LL)$ and $(\FF',\LL')$, then 
$$\Phi_{|S_1}\circ g_{S_1}(\FF,\LL)=g_{S'_1}(\FF',\LL')\circ\Phi_{|S_1}.$$
Reciprocally, if $\mathrm{CS}(\FF)=\mathrm{CS}(\FF')$ and $\phi:S_1\rightarrow S'_1$ is a diffeomorphism satisfying
$$\phi\circ g_{S_1}(\FF,\LL)=g_{S'_1}(\FF',\LL')\circ\phi$$
then $\phi$  uniquely extends to a diffeomorphism $\Phi$ of $(\C^2,0)$ sending $(\FF,\LL)$ to $(\FF',\LL')$.
\end{coro}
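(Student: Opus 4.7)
The plan is to reduce both directions of the corollary to the corresponding intertwining statements on the tangent curves $T := T(\FF,\LL)$ and $T' := T(\FF',\LL')$, by exploiting the factorization $g_{S_1}(\FF,\LL) = \pi_{S_1}\circ h_\gamma \circ \pi_{S_1}^{-1}$ built into the definition of the sliding. Both $\pi_{S_1}:T\to S_1$ and $\pi_{S'_1}:T'\to S'_1$ are local biholomorphisms, since the tangent curves are transverse to the corresponding separatrices by Lemma~\ref{lem2}. Once the statements are phrased on $T$ and $T'$, Proposition~\ref{pro8} does the heavy lifting.

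For the direct implication, suppose $\Phi$ sends $(\FF,\LL)$ to $(\FF',\LL')$. Intrinsicality of the tangent curve forces $\Phi(T)=T'$ and $\Phi(S_1)=S'_1$. Because $\Phi$ maps leaves of $\LL$ to leaves of $\LL'$, the naturality relation $\pi_{S'_1}\circ\Phi|_T=\Phi|_{S_1}\circ\pi_{S_1}$ holds. Lifting $\Phi$ through the blow-up $\sigma_1$ yields a biholomorphism intertwining the vanishing holonomy representations on $D_1$ and $D_1'$; as holomorphic maps are orientation-preserving, compatible generators can be chosen so that $\Phi|_T\circ h_\gamma = h'_\gamma\circ \Phi|_T$. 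Plugging both identities into the definition of $g_{S_1}$ gives the claimed commutation on $S_1$.

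For the converse, set $\tilde\phi := \pi_{S'_1}^{-1}\circ\phi\circ\pi_{S_1}:T\to T'$. A direct substitution of the sliding conjugacy $\phi\circ g_{S_1}(\FF,\LL)=g_{S'_1}(\FF',\LL')\circ\phi$ into this definition yields $\tilde\phi\circ h_\gamma = h'_\gamma\circ \tilde\phi$ on $T$. The assumption $\mathrm{CS}(\FF)=\mathrm{CS}(\FF')$ lets me invoke Proposition~\ref{pro8}, producing a unique diffeomorphism $\Phi$ of $(\C^2,0)$ that extends $\tilde\phi$, sends $(\FF,\LL)$ to $(\FF',\LL')$, and respects the separatrix labeling $S_i\mapsto S'_i$. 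That $\Phi$ also extends the original $\phi$ on $S_1$ is automatic: $\Phi$ must send the $\LL$-leaf through any $s\in S_1$ to the $\LL'$-leaf through $\Phi(s)\in S'_1$, which forces $\Phi|_{S_1}=\pi_{S'_1}\circ\tilde\phi\circ\pi_{S_1}^{-1}=\phi$. For uniqueness of the extension, any $\Phi$ extending $\phi$ and conjugating the pairs is forced to map $\LL$-fibers to $\LL'$-fibers and $T$ to $T'$, hence $\Phi|_T=\tilde\phi$; the uniqueness clause of Proposition~\ref{pro8} then pins down $\Phi$ globally.

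The only delicate point, which is really bookkeeping, is aligning generators of $\pi_1(D_1^*,p)$ and $\pi_1(D_1'^*,p')$ so that ``$h_\gamma$'' and ``$h'_\gamma$'' denote compatible holonomies on the two sides; this is settled once and for all by the canonical orientation on $D_1\cong\CP$ together with the pointwise correspondence of the singularities. Beyond this, the corollary is essentially a clean transcription of Proposition~\ref{pro8} into the language of slidings, with the projections $\pi_{S_1},\pi_{S'_1}$ serving as the dictionary.
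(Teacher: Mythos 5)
Your proposal is correct and follows essentially the same route as the paper: both directions are transported to the tangent curves via the projections $\pi_{S_1}$, $\pi_{S'_1}$ (your $\tilde\phi$ is exactly the paper's $\psi=\pi_{S'_1}^{-1}\circ\phi\circ\pi_{S_1}$), and the converse is then handed off to Proposition~\ref{pro8} using the hypothesis $\mathrm{CS}(\FF)=\mathrm{CS}(\FF')$. The extra remarks on generator alignment and on checking $\Phi|_{S_1}=\phi$ are harmless elaborations of steps the paper leaves implicit.
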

\begin{proof}
Because $\Phi$ conjugates $(\FF,\LL)$ and $(\FF',\LL')$, the restriction $\Phi_{|T(\FF,\LL)}$ commutes with the holonomies $h_\gamma$ and $h'_\gamma$ of $\FF$ and $\FF'$. Denote by $\pi_{S_1}$ (resp., $\pi_{S'_1}$) the projection by the leaves of $\LL$ (resp., $\LL'$) from $T(\FF,\LL)$ (resp., $T(\FF',\LL')$) to $S_1$ (resp., $S'_1$). Since $\Phi$ sends $(\FF,\LL)$ to $(\FF',\LL')$, we have
\begin{equation*}
\pi_{S'_1}\circ\Phi_{|T(\FF,\LL)}=\Phi_{|S_1}\circ\pi_{S_1}.
\end{equation*} 
Therefore
\begin{align*}
\Phi_{|S_1}\circ g_{S_1}(\FF,\LL)&=\Phi_{|S_1}\circ\pi_{S_1}\circ h_\gamma\circ\pi_{S_1}^{-1}=\pi_{S'_1}\circ\Phi_{|T(\FF,\LL)}\circ h_\gamma\circ\pi_{S_1}^{-1}\\
&=\pi_{S'_1}\circ h'_\gamma\circ\Phi_{|T(\FF,\LL)}\circ\pi_{S_1}^{-1}=g_{S'_1}(\FF',\LL')\circ\pi_{S'_1}\circ\Phi_{|T(\FF,\LL)}\circ\pi_{S_1}^{-1}\\
&=g_{S'_1}(\FF',\LL')\circ\Phi_{|S_1}.
\end{align*}

Reciprocally,  suppose $\phi:S_1\rightarrow S'_1$ is a diffeomorphism commuting with the slidings of $\FF$ and $\FF'$. Denote by $\psi=\pi^{-1}_{S'_1} \circ\phi\circ\pi_{S_1}$ then
\begin{align*}
\psi\circ h_\gamma&= \pi^{-1}_{S'_1} \circ\phi\circ\pi_{S_1}\circ h_\gamma= \pi^{-1}_{S'_1} \circ\phi\circ g_{S_1}(\FF,\LL)\circ\pi_{S_1}\\&= \pi^{-1}_{S'_1} \circ g_{S'_1}(\FF',\LL')\circ\phi\circ\pi_{S_1}= h'_\gamma\circ \pi^{-1}_{S'_1} \circ\phi\circ\pi_{S_1}=h'_\gamma\circ\psi.
\end{align*}
By Proposition \ref{pro8}, $\psi$ uniquely extends to a diffeomorphism $\Phi$ that sends $(\FF,\LL)$ to $(\FF',\LL')$. 
\end{proof}

\begin{rema}
In particular, if in Corollary \ref{cor3} we have  $S_1=S'_1$ and $g_{S_1}(\FF,\LL)=g_{S'_1}(\FF',\LL')$ then there exists a diffeomorphism sending $(\FF,\LL)$ to $(\FF',\LL')$ and fixing points in $S_1$.  
\end{rema}


\section{Strict classification of foliations in $\mathcal{M}$}\label{sec3}
This whole section is devoted to prove Theorem \ref{thr1}.

\begin{proof}[Proof of theorem \ref{thr1}]
The direction ((ii)$\Rightarrow$(iii)) is obvious.\\

((i)$\Rightarrow$(ii)) Since the Camacho-sad index is an analytic invariant, it is obvious that $\mathrm{CS}(\tF)=\mathrm{CS}(\tF')$.

Let $\Phi$ be the strict conjugacy and $\tilde{\Phi}:(\MM,\DD)\rightarrow (\MM,\DD)$ be its lifting by $\sigma$. Suppose that  a non-corner point $m$ of $\DD$ is a fixed point of $\tilde{\Phi}$. Then the linear map $D\tilde{\Phi}(m)$ has two eigenvalues. One corresponds to the direction of the divisor. We denote by $v(\tPhi)(m)$ the other eigenvalue and define $v(\tPhi)(m)=1$ for each corner $m$. 
\begin{lemma}\label{lem9}
$\tPhi_{|\DD}=\Id$ so $v(\tPhi)$ is a function defined on $\DD$ and moreover  $v(\tPhi)\equiv 1$.
\end{lemma}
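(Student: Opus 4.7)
My plan is to prove both claims simultaneously by induction on the blow-up sequence $\sigma = \sigma_n \circ \cdots \circ \sigma_1$ constituting the desingularization, maintaining two joint properties at each stage $k$: (a) the intermediate lift $\tPhi_k$ is the identity on every irreducible component of the current exceptional divisor, and (b) at every point of that divisor the Jacobian of $\tPhi_k$ in coordinates adapted to the divisor is upper-triangular unipotent. Once carried through to the final stage $k = n$, property (a) gives $\tPhi_{|\DD} = \Id$, while (b) forces the transverse eigenvalue $v(\tPhi)$ to equal $1$ at every non-corner point of $\DD$; combined with the convention $v(\tPhi) = 1$ at corners, this yields $v(\tPhi) \equiv 1$.

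For the base case I would use that $\Phi$ is tangent to the identity at the origin: pulling it back by $\sigma_1$ in a standard chart $(x_1, y_1)$ directly puts the lift in the form $\tPhi_1(x_1, y_1) = (x_1 + y_1 H(x_1, y_1),\, y_1 + y_1^2 G(x_1, y_1))$, which immediately gives (a) on $D_1 = \{y_1 = 0\}$ and the required unipotent upper-triangular Jacobian $\bigl(\begin{smallmatrix} 1 & H(a,0) \\ 0 & 1 \end{smallmatrix}\bigr)$ at every $(a, 0) \in D_1$.

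For the inductive step, one blows up a point $p$ of the current divisor. If $p$ is a corner $D_i \cap D_j$, the inductive hypothesis forces the Jacobian of $\tPhi_k$ at $p$ to be the identity matrix, and a direct computation in the blow-up charts shows that the induced map on the new component is again the identity with Jacobian upper-triangular unipotent along it. The delicate case is when $p$ lies on a single component $D_i$; here the Jacobian at $p$ is $\bigl(\begin{smallmatrix} 1 & \alpha \\ 0 & 1 \end{smallmatrix}\bigr)$ by (b), and the standard blow-up computation yields, in an adapted coordinate $w$, the restriction $\tPhi_{k+1}|_{D_{k+1}}$ as the parabolic Möbius $w \mapsto w/(1 + \alpha w)$ whose unique fixed point is the corner $D_{k+1} \cap D_i$. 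The main obstacle is to force $\alpha = 0$. Since $\FF \in \MM$ admits no dead branches, $D_{k+1}$ carries at least two singularities of $\tF$, and by $\Sep(\tF)\para\Sep(\tF')$ the singular sets of $\tF$ and $\tF'$ coincide, so $\tPhi|_{D_{k+1}}$ preserves this finite singular set. But a non-trivial parabolic Möbius has infinite forward orbit on every point other than its unique fixed point, whereas iterates of any singularity distinct from the corner would have to remain in the finite set of singularities; this contradiction forces $\alpha = 0$, yielding (a) on $D_{k+1}$, after which a further short blow-up computation gives (b).
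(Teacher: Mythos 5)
Your proof is correct, and its skeleton (induction along the blow-up tower, with the explicit chart computation showing that the lift of a map tangent to the identity fixes the new component pointwise with unipotent Jacobian) coincides with the paper's. Where you genuinely diverge is in the crucial step of killing the off-diagonal term of the Jacobian at a blow-up centre $p$ lying on a single component $D_i$. The paper does this \emph{before} blowing up: since the new component is not a dead branch there is a branch $\ell_p$ of the separatrix set through $p$ not tangent to $D_i$, and $\Sep(\tF)\para\Sep(\tF')$ forces the direction of $\ell_p$ to be a second eigenvector of the differential; two independent eigenvectors, both with eigenvalue $1$, give $D\tPhi(p)=\Id$, and the inductive hypothesis is then applied to the lift near $p$. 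You instead blow up first and observe that the induced map on the new $\CP$ is the parabolic $w\mapsto w/(1+\alpha w)$, which must permute the finite set of singular points of the intermediate transforms on $D_{k+1}$; since (no dead branch) that set contains a point other than the unique fixed point, and a nontrivial parabolic has only infinite orbits elsewhere, $\alpha=0$. The two arguments use the same geometric input (absence of dead branches) but different parts of the hypothesis $\Sep(\tF)\para\Sep(\tF')$: the paper needs the tangency of the separatrices, while your dynamical argument only needs the equality of the singular sets, which is marginally weaker. The price is a little more bookkeeping: you must propagate property (b) along the strict transforms of the old components (this needs $v\equiv 1$ on $D_i$ near $p$, which your invariant does supply), and your phrase ``$D_{k+1}$ carries at least two singularities of $\tF$'' should really refer to the singular set of the intermediate transform at stage $k+1$ --- the finite invariant set your M\"obius argument acts on --- rather than of the final $\tF$; with that reading the argument is complete.
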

\begin{proof}
Denote by $\sigma_1$ the standard blowing-up at the origin of $(\C^2,0)$
$$\sigma_1:(\MM_1,D_1)\rightarrow(\C^2,0).$$
On $D_1$, we use the two standard chart $(x,\by)$ and $(\bx, y)$ together with the transition functions $\bx=\by^{-1}$, $y=x\by$. 
Suppose that 
$$\Phi(x,y)=(x+\alpha(x,y), y+\beta(x,y)),\;\alpha,\beta\in(x,y)^2.$$
Then in the coordinate system $(x,\by)$ we have
\begin{align*}\Phi_1(x,\by)=\sigma_1^*\Phi(x,\by)
&=\left(x+\alpha(x,x\by),\frac{x\by+\beta(x,x\by)}{x+\alpha(x,x\by)}\right)\\&=(x(1+\ldots), \by+x(\beta_0+\ldots)),
\end{align*}
where $\beta_0=\frac{\partial^2\beta}{\partial x^2}(0,0)$. Therefore $\Phi_1:(\MM_1,\DD_1)\rightarrow(\MM_1,D_1)$ fixes points in $D_1$ and $v(\Phi_1)\equiv 1$. Let $p$ be a non-reduced singularity of $\sigma_1^*\FF$ on $D_1$. We will show that $D\Phi_1(p)=\Id$ and apply the inductive hypothesis for $\Phi_1$ in a neighborhood of $p$.  Indeed, let $\sigma_2$ be the blowing-up at $p$ and $D_2=\sigma_2^{-1}(p)$. Denote by $S_p$ and $S'_p$ all invariant curves of $\sigma_1^*\FF$ and $\sigma_1^*\FF'$ through $p$. Because every element in $\MM$ after desingularization admits no dead component in its exceptional divisor, $D_2$ is not a dead component. Therefore there is at least one irreducible component $\ell_p$ of $S_p$ that are not tangent to $D_1$. Because $\Phi_{1*}(S_p)=S'_p$ and $\Sep(\tF)\para\Sep(\tF')$,  $D\phi_1(p)$ has an eigenvector different from the direction of $D_1$, which is corresponding to the direction of $\ell_p$. So $D\Phi_1(p)$ has two eigenvectors. Since both of their eigenvalues are $1$, we have $D\Phi_1(p)=\Id$. 
\end{proof}

Now let $\LL\in\mathcal{R}_{\FF}(\LL_0)$ and denote by $\LL'=\Phi_*(\LL)$. Since $\tPhi_{|\DD}=\Id$, the strict transforms $\tL$ and $\tL'$ have the same Dulac maps. Moreover, because $\Sep(\tF)\para\Sep(\tF')$, at each singularity $p$ of $\tF$,  $D\tPhi(p)$ has two eigenvectors. As $v(\tPhi)\equiv 1$ and $\tPhi_{|\DD}=\Id$ we have $D\tPhi(p)=\mathrm{Id}$. Therefore the invariant curves of $\tL$ and $\tL'$ through $p$ are tangent. This gives $\LL'=\Phi_*(\LL)\in\RR_\FF(\LL_0)$. Because $\tPhi$ fixes points in $\DD$, by Corollary \ref{cor3} the identity map commutes with the slides of $\FF$ and $\FF'$. This leads to $S(\FF,\LL)=S(\FF',\LL')$. Consequently, $\mathfrak{S}(\FF,\LL_0)=\mathfrak{S}(\FF',\LL_0)$. Moreover, the vanishing holonomy representation of $\FF$ and $\FF'$ are conjugated by $\tPhi$. Since $v(\tPhi)\equiv 1$ this conjugacy is strict.\\

((iii)$\Rightarrow$(i))
Suppose that $\LL$, $\LL'\in\mathcal{R}_{\FF}(\LL_0)$ satisfy $S(\FF,\LL)=S(\FF',\LL')$. By Corollary \ref{cor3}, at each singularity $p_i$, $i\in\{1,\ldots,k\}$, of $\tF$ there exists a neighborhood $U_i$ of $p_i$ and a local conjugacy
$$\Phi_i:(\tF,\tL)_{|U_i}\rightarrow (\tF',\tL')_{|U_i}$$
such that $\Phi_{i|\DD\cap U_i}=\Id$.
Let $U_0$ be a neighborhood of $\DD\setminus\cup_{i=1}^k U_i$ such that $U_0$ does not contain any singularity of $\tF$. Note that $U_0$ is not connected and the restriction of $\tF$ and $\tF'$ on $U_0$ are regular. The strict conjugacy of the vanishing holonomy representations can be extended by path lifting method to the conjugacy
$$\Phi_0:(\tF,\tL)_{|U_0}\rightarrow (\tF',\tL')_{|U_0},$$
satisfying that the second eigenvalue function $v(\Phi_0)$ is identically $1$. We will show that on each intersection $V_i=U_i\cap U_0$, $\Phi_i$ and $\Phi_0$ coincide. Denote by
$$\Psi_i=\Phi^{-1}_{i| V_i}\circ \Phi_{0| V_i}:(\tF,\tL)_{|V_i}\rightarrow (\tF,\tL)_{|V_i}.$$
We claim that $v(\Psi_i)\equiv 1$ on $\DD\cap V_i$. Let $p,q$ in $V_i\cap\DD$. Denote by $l_p$ and $l_q$ the invariant curves of $\tL$ through $p$ and $q$ respectively. As the two maps $\Psi_{i|l_p}$ and $\Psi_{i|l_q}$ are conjugated by the holonomy transport, we have $v(\Psi_i)(p)=v(\Psi_i)(q)$. Consequently, $v(\Psi_i)$ is constant on $V_i$. Since $v(\Phi_0)\equiv 1$, it follows that $v(\Phi_i)$ is constant on $V_i\cap\DD$. Therefore, $v(\Phi_i)$ is constant on $U_i\cap\DD$. Moreover, at the singularity $p_i$, $D\Phi_i(p_i)$ has three eigenvectors corresponding to the directions of the divisor and the directions of invariant curves of $\tF$ and $\tL$ through $p_i$. Since $D\Phi_i(p_i)$ has also one eigenvalue $1$ corresponding to the directions of the divisor, we have $D\Phi_i(p_i)=\Id$. This gives  $v(\Phi_i)\equiv 1$ and consequently $v(\Psi_i)\equiv 1$.

Now at each point $p \in V_i\cap\DD$, the map $\Psi_{i|l_p}$ commutes with the holonomy of $\tF$ around $p_i$. Since the Camacho-Sad index $\lambda_i$ of $\tF$ at $p_i$ is not rational, Lemma \ref{lem10} below says that $\Psi_{i|l_p}=\Id$ and so $\Psi_i=\Id$. Hence we can glue all $\Phi_i$ together and the strict conjugacy we need is the projection of this diffeomorphism on $(\C^2,0)$ by $\sigma$.
\end{proof}

\begin{lemma}\label{lem10}
Let $h\in\Diff(\C,0)$ such that $h'(0)=\exp(2\pi i \lambda)$ where $\lambda\not\in\Q$. If $\psi\in\Diff(\C,0)$ satisfying $\psi'(0)=1$ and $\psi\circ h=h\circ \psi$ then $\psi=\Id$.
\end{lemma}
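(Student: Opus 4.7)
The plan is to pass to formal Taylor series at the origin and show by induction on the degree that the coefficients of $\psi(z)-z$ must vanish. Since $\psi$ is a convergent power series, vanishing as a formal series will imply $\psi=\mathrm{Id}$.

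Write $\mu=\exp(2\pi i\lambda)$ and expand
\[
h(z)=\mu z+\sum_{n\ge 2} h_n z^n,\qquad \psi(z)=z+\sum_{n\ge 2}\psi_n z^n.
\]
I would then compute the coefficient of $z^n$ in the formal identity $\psi\circ h-h\circ\psi=0$. The key observation is the following decomposition: the coefficient of $z^n$ on the left equals
\[
\psi_n(\mu^n-\mu)+P_n(\psi_2,\ldots,\psi_{n-1};h_2,\ldots,h_n),
\]
where $P_n$ is a universal polynomial that depends only on the strictly lower coefficients of $\psi$ (and on the coefficients of $h$). Indeed, the only contributions linear in $\psi_n$ come from the monomial $\psi_n\,h(z)^n$ inside $\psi\circ h$, which contributes $\psi_n\mu^n$, and from the term $\mu\,\psi(z)$ inside $h\circ\psi$, which contributes $\mu\psi_n$; every other occurrence of $\psi_n$ lies at degree strictly greater than $n$.

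Now I proceed by induction on $n\ge 2$. Assuming $\psi_2=\cdots=\psi_{n-1}=0$, observe that the identity map $z\mapsto z$ trivially commutes with $h$; applying the same expansion to $\psi\equiv\mathrm{id}$ shows $P_n(0,\ldots,0;h_2,\ldots,h_n)=0$. Therefore, for our $\psi$, the relation becomes $\psi_n(\mu^n-\mu)=0$. Since $\lambda\notin\mathbb{Q}$, we have $\mu^{n-1}\neq 1$, so $\mu^n\neq\mu$ and thus $\psi_n=0$. This closes the induction and yields $\psi(z)=z$ as a formal series, hence as a germ of holomorphic diffeomorphism.

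The main obstacle is purely bookkeeping: isolating cleanly the linear-in-$\psi_n$ contribution from the rest in the expansion of the two compositions. I would verify the formula for the coefficient of $z^n$ by carrying out the cases $n=2,3$ explicitly before asserting the general expression, to make sure no $\psi_n$-dependent term is hidden in higher-order products such as $\psi_k h(z)^k$ or $h_k\psi(z)^k$ with $k<n$ (these produce only lower-indexed $\psi$'s at degree $n$, which is why the linear-in-$\psi_n$ part reduces to $\psi_n(\mu^n-\mu)$).
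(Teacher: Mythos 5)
Your proof is correct, and it takes a genuinely different route from the paper's. The paper first invokes the formal linearization of $h$: since $\lambda\notin\Q$ the multiplier $\mu=\exp(2\pi i\lambda)$ is non-resonant, so there is a formal $\phi$ with $\phi\circ h\circ\phi^{-1}(z)=\mu z$; conjugating $\psi$ by the same $\phi$ reduces the statement to a diffeomorphism tangent to the identity commuting with the rotation $z\mapsto\mu z$, where comparing the coefficient of $z^j$ gives $a_j\mu^{j}=a_j\mu$ and hence $a_j=0$. You instead run the induction directly on the commutation relation $\psi\circ h=h\circ\psi$ without linearizing, and your isolation of the linear-in-$\psi_n$ term is sound: in $\psi\circ h$ the only degree-$n$ contribution involving $\psi_n$ is $\psi_n\mu^n$ from $\psi_n h(z)^n$, in $h\circ\psi$ it is $\mu\psi_n$ from the linear term of $h$, and the remainder $P_n$ vanishes when $\psi_2=\cdots=\psi_{n-1}=0$ (indeed every monomial of $P_n$ contains some $\psi_j$ with $2\le j\le n-1$, so you do not even need the ``identity commutes with $h$'' device, though it is a clean way to see it). Both arguments hinge on the same arithmetic fact $\mu^{n-1}\neq 1$ for $n\ge 2$. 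What each buys: the paper's version is shorter on the page because the linearization theorem absorbs the bookkeeping, but it quotes a nontrivial external result; yours is entirely self-contained and elementary, at the cost of the coefficient-tracking you rightly flag as the only delicate point.
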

\begin{proof}
Since $\lambda\not\in\Q$,  there is a formal diffeomorphism $\phi$ such that  $\phi\circ h\circ\phi^{-1}(z)=\exp(2\pi i\lambda)z$. Denote by $\tilde{\psi}=\phi\circ \psi\circ\phi^{-1}$, then $\tilde{\psi}'(0)=1$ and  $\tilde{\psi}(\exp(2\pi i\lambda)z)=\exp(2\pi i\lambda)\tilde{\psi} $. The proof is reduced to show that $\tilde{\psi}=\Id$. Suppose that
$\tilde{\psi}(z)=z+\sum_{j=2}^{\infty}a_jz^j$. Then 
$$\tilde{\psi}(\exp(2\pi i\lambda)z)=\exp(2\pi i\lambda)z+\sum_{j=2}^{\infty}a_j\exp(2j\pi i\lambda)z^j,$$
and 
$$\exp(2\pi i\lambda)\tilde{\psi}(z)=\exp(2\pi i\lambda)z+\sum_{j=2}^{\infty}a_j\exp(2\pi i\lambda)z^j.$$
Since $\lambda\not\in\Q$, it forces $a_j=0$ for all $j\ge 2$. Hence $\tilde{\psi}=\Id$.
\end{proof}
\section{Finite determinacy}\label{sec4}
Let $S$ be a germ of curve at $p$ in a surface $X$. We denote by $\Sigma(S)$ the set of all germs of singular curves having the same desingularization map and having the same singularities as $S$ after desingularization. Here, the singularities of $S$ after desingularization are the singularities of the curve defined by the union of strict transform of $S$ and the exceptional divisor. If $S$ is smooth, we denote by $\m^n(S)$ the set of all holomorphic functions on $S$ whose vanishing orders at $p$ are  at least $n$. 
\begin{prop}\label{pro11}
Let $S$ be a germ of curve  in $(\C^2,0)$ and $S_1,\ldots,S_k$ be its irreducible components. Suppose that $\sigma:(\MM,\DD)\rightarrow(\C^2,0)$ is a finite composition of blowing-ups such that all the transformed curves $\sigma^*S_i=\tS_i$ are smooth. Then there exists a natural $N$ such that if $f_i\in\m^N(\tS_i)$, $i=1,\ldots,k$, then there exists $F\in\C\{x,y\}$ such that $F\circ\sigma_{|\tS_i}=f_i$.  Moreover, the same $N$ can be chosen for all elements in $\Sigma(S)$. 
\end{prop}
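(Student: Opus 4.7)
The plan is to recast the problem as a statement about the conductor of the one-dimensional reduced local ring $\OO_{S,0}$ inside its normalization.

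First I would choose a local holomorphic coordinate $t_i$ on each smooth component $\tS_i$ at $p_i=\tS_i\cap\DD$ and consider the restriction map
\[
\rho\colon\C\{x,y\}\longrightarrow\bigoplus_{i=1}^{k}\C\{t_i\},\qquad F\longmapsto\bigl(F\circ\sigma_{|\tS_i}\bigr)_{i}.
\]
Its kernel is the ideal $I(S)\subset\C\{x,y\}$ of $S$, so $\rho$ factors through $\OO_{S,0}=\C\{x,y\}/I(S)$, and since $\OO_{S,0}$ is reduced the induced map $\OO_{S,0}\hookrightarrow\bigoplus_i\C\{t_i\}$ is the canonical inclusion into the normalization $\widetilde{\OO}_{S,0}=\bigoplus_i\C\{t_i\}$. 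Thus the image of $\rho$ is exactly $\OO_{S,0}$, and the first assertion reduces to exhibiting some $N$ with $\bigoplus_i\m^{N}(\tS_i)\subset\OO_{S,0}$.

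Next I would invoke the conductor ideal
\[
\mathfrak{c}=\bigl\{a\in\widetilde{\OO}_{S,0}\,\big|\,a\cdot\widetilde{\OO}_{S,0}\subset\OO_{S,0}\bigr\},
\]
which is nonzero and of finite colength because $\OO_{S,0}\hookrightarrow\widetilde{\OO}_{S,0}$ is a finite extension of one-dimensional reduced Noetherian analytic algebras. Every ideal of the product ring $\widetilde{\OO}_{S,0}$ is a direct sum of ideals of the factors, so $\mathfrak{c}=\bigoplus_i(t_i^{c_i})$ for certain integers $c_i\geq 0$. The choice $N=\max_i c_i$ then gives $\bigoplus_i\m^{N}(\tS_i)\subset\mathfrak{c}\subset\OO_{S,0}$, and the desired $F\in\C\{x,y\}$ is obtained by lifting along the surjection $\C\{x,y\}\twoheadrightarrow\OO_{S,0}$ the unique preimage in $\OO_{S,0}$ of a given tuple $(f_i)$.

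For the uniform statement over $\Sigma(S)$ I would rely on the Gorenstein formula for plane curves,
\[
c_i=2\delta(S_i)+\sum_{j\neq i}(S_i,S_j)_{0},
\]
together with the classical Enriques descriptions of $\delta(S_i)$ and $(S_i,S_j)_{0}$ as sums of products of multiplicities of successive strict transforms taken over the infinitely near points of the desingularization tree. The hypotheses defining $\Sigma(S)$, namely that $\sigma$ is a common desingularization and that the singular set of $\tS\cup\DD$ is prescribed, rigidify this combinatorial tree and the multiplicity sequences of the individual branches, so that $\delta(S_i)$ and $(S_i,S_j)_{0}$ are the same for every $S'\in\Sigma(S)$. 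Consequently the same $N$ works throughout $\Sigma(S)$.

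The main obstacle is this last uniformity: while the existence of $N$ for a single $S$ is an immediate consequence of the finiteness of the conductor of a reduced analytic algebra in its normalization, one must verify that the rather weak-looking condition ``same $\sigma$ and same singular locus of $\tS\cup\DD$'' really forces the full multiplicity sequences of corresponding branches (and the infinitely near points at which pairs of branches coincide) to agree, so that the conductor exponents $c_i$ are genuine combinatorial invariants of $\Sigma(S)$ rather than analytic invariants of a particular member.
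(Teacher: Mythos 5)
Your proof is correct, but it takes a genuinely different route from the paper's. You reduce the first assertion to the nonvanishing of the conductor $\mathfrak{c}$ of $\OO_{S,0}$ in its normalization $\bigoplus_i\C\{t_i\}$ (using that $\sigma|_{\tS_i}$ is the normalization map of the branch $S_i$), which yields the sharp exponent $N=\max_i c_i$ in one stroke. The paper argues by hand instead: for irreducible $S$ it writes $\sigma^{-1}=(\alpha/\beta,\,\mu/\nu)$, shows that $\alpha\circ\sigma$ and $\beta\circ\sigma$ restrict to $\tS$ as a common unit times $x_p^{k+1}$ and $x_p^{k}$, and uses the numerical semigroup generated by $k$ and $k+1$ to get $\m^{(k-1)(k+1)}(\tS)\subset\sigma^*\C\{x,y\}_{|\tS}$; the several branches are then glued by a Chinese-remainder argument whose key input is the Nullstellensatz inclusion $(x,y)^{M_1}\subset(g_i,g_j)$. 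The trade-off appears in the uniformity over $\Sigma(S)$: in the paper the irreducible bound is uniform for free, because $k$ depends only on $\sigma$ and the point $p$ and not on the curve, while $M_1$ is bounded by the intersection numbers $\mathrm{I}(g_i,g_j)=\dim_\C\C\{x,y\}/(g_i,g_j)$, which are declared constant on $\Sigma(S)$ by topological invariance; you must instead import the classical relation $c_i=2\delta(S_i)+\sum_{j\ne i}(S_i,S_j)_0$ together with the Noether--Enriques expressions of $\delta$ and of the intersection multiplicities through infinitely near points. The caveat you flag at the end --- that one must verify that membership in $\Sigma(S)$ really pins down the equisingularity data --- is a genuine point, but it is exactly the same tacit step the paper takes when it asserts that $\mathrm{I}(g_i,g_j)$ does not depend on the element of $\Sigma(S)$; your argument is no worse off there, and once that common point is granted both proofs are complete.
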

\begin{proof}
We first consider the statement when $S$ is irreducible. If $S$ is smooth then $\tS$ is diffeomorphic to $S$. So we can suppose that $S$ is singular. 
Denote by $p=\tS\cap\DD$. 
Choose a coordinate system $(x_p,y_p)$ in a neighborhood of $p$ such that  $\tS=\{y_p=0\}$ and $\DD=\{x_p=0\}$. Then $\sigma^{-1}$ is defined by 
$$x_p=\frac{\alpha(x,y)}{\beta(x,y)}\;\; \text{and}\;\; y_p=\frac{\mu(x,y)}{\nu(x,y)},$$
where $\alpha,\beta,\mu,\nu\in\C\{x,y\}$, $\mathrm{gcd}(\alpha,\beta)=1$ and $\mathrm{gcd}(\mu,\nu)=1$.
So we have
\begin{equation}
\frac{\alpha\circ\sigma(x_p,y_p)}{\beta\circ\sigma(x_p,y_p)}=x_p
\end{equation}
Therefore, there exist a natural $k$ and a holomorphic function $h$ such that 
\begin{equation}\label{equa11}
\alpha\circ\sigma(x_p,y_p)=x_p^{k+1}h(x_p,y_p),\,\,\beta\circ\sigma(x_p,y_p)=x_p^{k}h(x_p,y_p),
\end{equation} 
where $x_p\nmid h$. We claim that $h$ is a unit. Indeed, suppose $h(0,0)=0$ and denote by $\tilde{L}$ the curve $\{h(x_p,y_p)=0\}$. Let $L$ be a curve defined  in $(\C^2,0)$ such that $\sigma^*(L)=\tilde{L}$. Let $\{\bar{h}(x,y)=0\}$ be a reduced equation of $L$. By \eqref{equa11}, $\bar{h}|\alpha$ and $\bar{h}|\beta$. It contradicts $\mathrm{gcd}(\alpha,\beta)=1$.
Now denote by $u(x_p)=h(x_p,0)$ which is a unit, we have
\begin{equation}\label{11}
\alpha\circ\sigma(x_p,0)=u(x_p)x_p^{k+1},\,\beta\circ\sigma(x_p,0)=u(x_p)x_p^{k}.
\end{equation}
For each $m\ge (k-1)(k+1)$ there exists $j\in\{0,\ldots,k-1\}$ such that $k|(m-j(k+1))$. Thus
\begin{equation*}
m=ik+j(k+1),\, i,j\in\N.
\end{equation*}
So \eqref{11} implies that if a holomorphic function $f(x_p)$ satisfies $x_p^{(k-1)(k+1)}|f(x_p)$ then there exists a holomorphic function $F(x,y)$ such that $F\circ\sigma(x_p,0)=f(x_p)$. Consequently
\begin{equation}\label{equation13}
\m^{(k-1)(k+1)}(\tS)\subset\sigma^*\C\{x,y\}_{|\tilde{S}}.
\end{equation}
In the general case, suppose that $S_i$ is defined by $\{g_i=0\}$. If $f_i\in\m^{N}(\tS_i)$, $i=1,\ldots,k$, with $N$ big enough, there exist $F_i$, $i=1,\ldots,k$, such that $F_i\circ\sigma_{|\tS_i}=f_i$. We will find a holomorphic function $F$ such that $F_{|S_i}=F_{i|S_i}$ for all $i=1,\ldots,k$. This is reduced to show that there exists a natural $M$ such that the following morphism $\Theta$ is surjective
\begin{equation*}
\frac{(x,y)^M}{{(g_1)\cap\ldots\cap (g_k)\cap (x,y)^M}}\rightarrow\frac{(x,y)^M}{{(g_1)\cap (x,y)^M}}\oplus\ldots\oplus\frac{(x,y)^M}{{(g_k)\cap (x,y)^M}}.
\end{equation*}
Indeed, by Hilbert's Nullstellensatz, there exists a natural $M_1$ such that 
\begin{equation}\label{eq7}
(x,y)^{M_1}\subset (g_i,g_j)
\end{equation}
 for all $1\leq i < j \leq k$. We will show that for all $i=1,\ldots,k$, $j=0,\ldots,(k-1)M_1$ the elements $e_{ij}=(0,\ldots,\overline{x^jy^{(k-1)M_1-j}},\ldots,0)$, where $\overline{x^jy^{(k-1)M_1-j}}$ is in the $i^{th}$ position, are in $\mathrm{Im}\Theta$ and then $M$ can be chosen as $(k-1)M_1$. We decompose 
$$x^jy^{(k-1)M_1-j}=\prod_{\substack{l=1,\ldots,k\\ l\neq i}}x^{j_l}y^{M_1-j_l},$$
where $0\le j_l\le M_1$.  By \eqref{eq7}, there exist $a_{il},b_{il}\in\C\{x,y\}$ such that
 $a_{il}g_i+b_{il}g_l=x^{j_l}y^{M_1-j_l}$. This implies that 
 \begin{equation*}
 e_{ij}=\Theta\left(\prod_{\substack{l=1,\ldots,k\\ l\neq i}} \left(x^{j_l}y^{M_1-j_l}-a_{il}g_i\right)\right)\in\mathrm{Im}\Theta
 \end{equation*}
Now we will show that the same $N$ can be chosen for all elements of $\Sigma(S)$. In the case $S$ is irreducible, let $S'$ in $\Sigma(S)$ and $\{y_p=s(x_p)\}$ be the equation of $\sigma^*(S')=\tilde{S}'$. We also have
\begin{equation*}
\alpha\circ\sigma(x_p,s(x_p))=v(x_p)x_p^{k+1},\,\beta\circ\sigma(x_p,s(x_p))=v(x_p)x_p^{k},
\end{equation*}
where $v(x_p)=h(x_p,s(x_p))$ which is a unit. Consequently, \eqref{equation13} holds. In the general case, it is sufficient to show that the same $M_1$ in \eqref{eq7} can be chosen for all elements of $\Sigma(S)$. Let $M_{ij}$ be the smallest natural satisfying 
\begin{equation*}
(x,y)^{M_{ij}}\subset (g_i,g_j).
\end{equation*}
We claim that
$$M_{ij}\le \mathrm{I}(g_i,g_j)=\mathrm{dim}_\C\frac{\C\{x,y\}}{(g_i,g_j)}.$$ 
Indeed, there exists $x^{l}y^{M_{ij}-1-l}\not\in(g_i,g_j)$. Let $P_m$, $m=1,\ldots,M_{ij}$, be a sequence of monomials such that $P_1=1$, $P_{M_{ij}}=x^{l}y^{M_{ij}-1-l}$ and either $P_{m+1}=x\cdot P_m$ or $P_{m+1}=y\cdot P_m$. Since $P_m|P_{M_{ij}}$  we have $P_m\not\in(g_i,g_j)$ for all $m=1,\ldots, M_{ij}$. We will show that $\{P_1,\ldots, P_{M_{ij}}\}$ is independent in the vector space   $\frac{\C\{x,y\}}{(g_i,g_j)}$ over $\C$. Suppose that 
$$c_1P_1+\ldots+c_{M_{ij}}P_{M_{ij}}\in(g_i,g_j).$$
Suppose there exists $c_m\ne 0$. Let $m_0$ be the smallest natural such that $c_{m_0}\ne 0$. Then
$$c_{m_0}P_{m_0}+\ldots+c_{M_{ij}}P_{M_{ij}}=P_{m_0}(c_{m_0}+\ldots)\in(g_i,g_j).$$
This implies that $P_{m_0}$ in $(g_i,g_j)$ and it is a contradiction.

Now, it is well known that the intersection number $\mathrm{I}(g_i,g_j)$ is a topological invariant. It means that if two curves $\{g_i\cdot g_j=0\}$ and $\{g'_i\cdot g'_j=0\}$ are topologically conjugated then $\mathrm{I}(g_i,g_j)=\mathrm{I}(g'_i,g'_j)$. Consequently, $M_1$ can be chosen as $max_{1\le i<j\le k}\mathrm{I}(g_i,g_j)$ that doesn't depend on the elements of $\Sigma(S)$.
\end{proof}

Now, we will prove the finite determinacy property of the slidings of foliations.

\begin{proof}[Proof of Theorem \ref{thr2}]
Suppose that $\tilde{T}(\FF,\LL)=\cup T_i$, $\tilde{T}'(\FF',\LL')=\cup T'_i$  where $T_i$ and $T'_i$ are irreducible components of $T(\FF,\LL)$ and $T(\FF',\LL')$. Then the singularities of $\tF$ and $\tF'$ are $p_i=T_i\cap\DD=T'_i\cap\DD$. Denote by $h_{i\gamma}$ the holonomy of $\tF$ on $T_i$.

Now let $p_i$ be a singularity $\tF$. We first suppose that $p_i$ is not a corner. Denote by $D$ the irreducible component of $\DD$ through $p_i$. Because $\tF$ and $\tF'$ are strictly conjugated in a neighborhood of $p_i$, by Corollaries \ref{cor11} and \ref{cor3}, there exists a diffeomorphism $\psi_i$ in $\Diff(D,p_i)$ tangent to identity such that
\begin{equation}\label{equa13}
 \psi_i\circ g_{D,p_i}(\tF,\tL)=g_{D,p_i}(\tF',\tL')\circ\psi_i.
 \end{equation}
Let $\pi_D$ (resp., $\pi'_D$) be the projection from $T_i$ (resp., $T'_i$) to $D$ that follows the leaves of $\tL$ (resp., $\tL'$). Denote by
\begin{equation}\label{e13}
\phi_i=\pi_D^{-1}\circ\psi_i^{-1}\circ\pi_D.
\end{equation} 
Then $\phi_i\in\Diff(T_i,p_i)$. Since $J^{N}(S(\FF,\LL))=J^{N}(S(\FF',\LL'))$, $\phi_i$ is tangent to identity map at order at least $N$.

In the case $p_i$ is a corner, let $D$ be one of two irreducible components of $\DD$ through $p_i$ and define $\phi_i$ as above. We also have that $\phi_i$ is tangent to identity map at order at least $N$.
\begin{lemma}\label{lem17}
Suppose that there exists a diffeomorphism $\Phi$ such that the lifting $\sigma^*(\Phi)=\tilde{\Phi}$ satisfies
\begin{itemize}
\item{} $\tilde{\Phi}_{|\DD}=\Id$,
\item{} $\tilde{\Phi}_{|T_i}=\phi_i$,
\item{} $T(\FF,\Phi_*\LL)=T(\FF,\LL)$.
\end{itemize}
Then $\LL''=\Phi_*(\LL)$ satisfies $S(\FF,\LL'')=S(\FF',\LL')$.
\end{lemma}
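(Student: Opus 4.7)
The plan is to unfold the definition of the sliding of $\FF$ relative to $\LL''=\Phi_*\LL$ at each singularity $p_i$ of $\tF$, and to check, using only the three hypotheses on $\tPhi$, that each resulting diffeomorphism coincides with the corresponding sliding of $\FF'$ relative to $\LL'$. Since $\tF$ and $\tF'$ share the same desingularization and the same singularities, and since $T(\tF,\tL'')=T(\tF,\tL)=\bigcup T_i$ by the third hypothesis, the holonomy of $\tF$ used to compute $g_{D,p_i}(\tF,\tL'')$ is still $h_{i\gamma}$; what changes, when $\tL$ is replaced by $\tL''=\tPhi_*\tL$, is only the projection from $T_i$ to $D$ along the leaves.

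Next I would compute this new projection, call it $\pi_D^{\,\prime\prime}$, explicitly. For $q\in T_i$ the leaf of $\tL''$ through $q$ is the $\tPhi$-image of the leaf of $\tL$ through $\tPhi^{-1}(q)$; intersecting with $D$ and using $\tPhi_{|D}=\Id$ yields
$$\pi_D^{\,\prime\prime}(q)=\tPhi\bigl(\pi_D(\tPhi^{-1}(q))\bigr)=\pi_D(\tPhi^{-1}(q)),$$
so $\pi_D^{\,\prime\prime}=\pi_D\circ\phi_i^{-1}$ on $T_i$, by $\tPhi_{|T_i}=\phi_i$. Plugging this into the sliding formula and replacing $\phi_i$ by its definition in \eqref{e13}, the $\pi_D$ factors collapse and one obtains
$$g_{D,p_i}(\tF,\tL'')=\psi_i\circ g_{D,p_i}(\tF,\tL)\circ\psi_i^{-1},$$
which equals $g_{D,p_i}(\tF',\tL')$ by the commutation relation \eqref{equa13}.

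Finally, for a corner $p_i=D\cap D'$ the map $\phi_i$ was built using a specific choice of component $D$, so one must still check equality of the slidings on the other component $D'$. I would invoke \eqref{3}: on each side, the sliding on $D'$ is the push-forward by the corresponding Dulac map of the sliding on $D$. The Dulac map of $\tL''=\tPhi_*\tL$ at $p_i$ equals that of $\tL$ because $\tPhi_{|\DD}=\Id$, and the Dulac map of $\tL$ equals that of $\tL'$ since both $\LL$ and $\LL'$ belong to $\RR_\FF(\LL_0)$. Applying the same push-forward to the already-proved equality on $D$ therefore yields the equality on $D'$. The whole argument is essentially a direct calculation; the only point requiring genuine care is the bookkeeping of how the projection shifts as $\pi_D^{\,\prime\prime}=\pi_D\circ\phi_i^{-1}$ and its compatibility with the Dulac maps at corners, so I do not anticipate any serious obstacle.
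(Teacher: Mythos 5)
Your proposal is correct and follows essentially the same route as the paper's proof: both establish $\pi''_D=\pi_D\circ\phi_i^{-1}$ (you justify this identity slightly more explicitly via $\tPhi_{|\DD}=\Id$ and $\tPhi_{|T_i}=\phi_i$), then conjugate the holonomy $h_{i\gamma}$ to reduce to the commutation relation \eqref{equa13}, and finally handle corners via \eqref{3} and the coincidence of the Dulac maps of $\tL''$ and $\tL'$. No substantive differences.
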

\begin{proof}
Let $p_i$ be a singularity $\tF$. In the case $p_i$ is not a corner, we denote $D$, $\pi_D$, $\pi'_D$ as above. Let $\pi''_D$ be the projection following the leaves of $\tL''$ from $T_i$ to $D$, then 
$$\pi''_D=\pi_D\circ\phi_i^{-1}.$$
We have
\begin{align*}
g_{D,p_i}(\tF,\tL'')&=\pi''_D\circ h_{i\gamma} \circ\pi''^{-1}_D=\pi_D\circ\phi_i^{-1}\circ h_{i\gamma}\circ\phi_i\circ\pi_D^{-1}\\&= \psi_i\circ\pi_D\circ h_{i\gamma}\circ\pi_D^{-1}\circ\psi_i^{-1}= \psi_i\circ g_{D,p_i}(\tF,\tL)\circ\psi_i^{-1}\\&=g_{D,p_i}(\tF',\tL').
\end{align*} 
If $p_i$ is a corner, $p_i=D\cap D'$, we also have 
\begin{equation*}
g_{D,p_i}(\tF,\tL'')=g_{D,p_i}(\tF',\tL').
\end{equation*}
Since $\tilde{\Phi}_{|\DD}=\Id$ the Dulac maps of $\tL''$ and $\tL'$ in a neighborhood of $p_i$  are the same. So Remark \ref{re7} leads to
\begin{equation*}
g_{D',p_i}(\tF,\tL'')=g_{D',p_i}(\tF',\tL').
\end{equation*}
\end{proof}
Now we will prove the existence of $\Phi$ in Lemma \ref{lem17} for $N$ big enough. Suppose that $\FF$ and $\LL$ are respectively defined by
\begin{align*}
\omega&=a(x,y)dx+b(x,y)dy,\\
\omega_\LL&=c(x,y)dx+d(x,y)dy.
\end{align*}
Then the tangent curve $T=T(\FF,\LL)$ is defined by
$$q(x,y)=da-cb=0.$$
Let $X_q=\frac{\partial q}{\partial y}\ddx-\frac{\partial q}{\partial x}\ddy$ be a vector field tangent to $T$ and $\tilde{X}_q$ be its lifting by $\sigma$. By the implicit function theorem, if $N$ is big enough, there exists $f_i$ defined on $T_i$ such that 
$$\exp[f_i]\left(\restr{\tilde{X}_{q}}{T_i}\right)=\phi_i.$$
Using Proposition \ref{pro11}, by choosing  $N$ big enough, there exists $f\in\C\{x,y\}$ such that
$$\exp[f\circ\sigma]\restr{\tilde{X}_{q}}{T_i}=\phi_i.$$
For each $\Phi=(\Phi_1,\Phi_2)\in\Diff(\C^2,0)$, denote by $$<\Phi>=\frac{\omega_\LL\wedge\Phi^*\omega_\LL}{dx\wedge dy} =c\left(c\circ\Phi\frac{\partial\Phi_1}{\partial y}+d\circ\Phi\frac{\partial\Phi_2}{\partial y}\right)-d\left(c\circ\Phi\frac{\partial\Phi_1}{\partial x}+d\circ\Phi\frac{\partial\Phi_2}{\partial x}\right).$$
It is easy to see that $T(\FF,\Phi_*\LL)=T$ if and only if $q|<\Phi>$. For each holomorphic function $f$, we denote by 
$$\Phi_f= \exp[f]X_q.$$
Lemma \ref{lemma18} below says that there exists a holomorphic function $u$ such that $\Phi_{f-uq}$ satisfies Lemma \ref{lem17} for $N$ big enough. Moreover, by Proposition \ref{pro11}, we can chose $N$ that only depends on $\FF$.
\end{proof}
\begin{lemma}\label{lemma18}
If $N$ is big enough, for all $f$ in $(x,y)^N$ there exists a holomorphic function $u$ such that 
$q|<\Phi_{f-uq}>$.
\end{lemma}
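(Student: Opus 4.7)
The plan is to find $u$ via a Newton/fixed-point iteration rooted in an explicit linearization of $u \mapsto <\Phi_{f-uq}>$ modulo $q$, combined with the extension result of Proposition~\ref{pro11} to pass from restrictions on the smooth branches of $T$ to a global holomorphic function on $\C^2$.

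First, I would compute the linear response of $<\Phi_g>$ modulo $q$ to $g = uq$. Using the Lie series $\Phi_g^*\omega_\LL = \omega_\LL + \mathcal L_{gX_q}\omega_\LL + \tfrac{1}{2!}\mathcal L_{gX_q}^2\omega_\LL + \cdots$, Cartan's formula $\mathcal L_{gX_q}\omega_\LL = d(g\theta) + g\,i_{X_q}d\omega_\LL$ with $\theta := \omega_\LL(X_q) = cq_y - dq_x$, and the identity $\omega_\LL \wedge dq = \theta\,dx\wedge dy$, a direct computation yields
\[
<\Phi_{uq}> \equiv u\theta^2 \pmod{q};
\]
moreover, since $X_q(q)=0$, every iterated Lie derivative $\mathcal L_{uqX_q}^k\omega_\LL$ for $k\geq 2$ is divisible by $q$, so the identity is exact (not merely to first order). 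Substituting $g = f - uq$ yields
\[
<\Phi_{f-uq}> \equiv L(f) - u\theta^2 + K(f,u) \pmod{q},
\]
where $L(f)$ is linear in $f$ and $K(f,u)$ gathers mixed/higher-order contributions of total degree at least $2$ in the pair $(f,u)$.

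Second, I would verify that $\theta$ restricted to each smooth branch $\tilde T_i$ of the strict transform of $T$ is a unit near its intersection $p_i$ with the divisor. Indeed, $\tilde X_q$ is tangent to $\tilde T_i$, and by Lemma~\ref{lem2} the direction of $\tilde T_i$ at $p_i$ is determined by the nonzero Camacho-Sad index $\lambda_i$ and the $\tilde\LL$-leaf direction to be transverse to the latter; hence $\tilde\omega_\LL(\tilde X_q)(p_i)\neq 0$. Consequently the pointwise equation $u\theta^2 \equiv (\text{anything}) \pmod{q}$ can be solved on each $\tilde T_i$ by dividing by $\theta^2|_{\tilde T_i}$, yielding a holomorphic function on the branch.

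The iteration is then: set $u_0 := 0$, and given $u_n$, let $r_n := <\Phi_{f-u_n q}> \pmod q$; on each branch form the holomorphic function $r_n|_{\tilde T_i}/\theta^2|_{\tilde T_i}$ (which vanishes to high order at $p_i$ when $f\in (x,y)^N$) and extend via Proposition~\ref{pro11} to $\delta u_n \in \C\{x,y\}$; set $u_{n+1} := u_n + \delta u_n$. The update cancels the linear term exactly, so the new residual equals $K(f, u_{n+1}) - K(f, u_n)$; the quadratic-in-$(f,u)$ nature of $K$ then gives $\mathrm{ord}(r_{n+1}) \geq \mathrm{ord}(r_n) + N - c$ for a constant $c$ depending only on $\omega_\FF$, $\omega_\LL$, and $\sigma$. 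For $N$ larger than $c$ plus the constant $N_0$ from Proposition~\ref{pro11}, the iterates $u_n$ form a Cauchy sequence in the $(x,y)$-adic topology whose limit $u$ solves $q \mid <\Phi_{f-uq}>$; careful estimates on a small polydisc promote this formal solution to a holomorphic one. The main obstacle is the uniform control of the loss of order $c$ across all iterates, which requires bounding contributions from (i) the higher Lie derivatives in the expansion of $\Phi_g^*\omega_\LL$, (ii) division by $\theta^2$ on each branch, and (iii) the Proposition~\ref{pro11} extension step; each is bounded by a constant depending only on the fixed data, giving the explicit threshold $N$ in the statement.
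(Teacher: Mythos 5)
Your starting point is right: restricted to $\{q=0\}$ the quantity $<\Phi_{f-uq}>$ depends on $u$ only through terms created by $X_q(q)=0$, the residual $<\Phi_f>$ has high vanishing order when $f\in(x,y)^N$, and the coefficient of $u$ is built from $\theta=cq_y-dq_x$, whose zero locus shares no branch with $\{q=0\}$. But two steps of your argument have genuine problems. First, $\theta|_{\tilde T_i}$ is \emph{not} a unit at $p_i$: $\theta$ cuts out the tangency curve between $\LL$ and the level curves of $q$, so it vanishes at the origin, and hence $\theta\circ\sigma$ vanishes along the whole exceptional divisor --- in particular at $p_i=\tilde T_i\cap\DD$. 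Your justification via Lemma \ref{lem2} confuses the pairing of the saturated strict transforms with the pullback of the function $\theta$ itself. The division by $\theta^2$ on each branch is therefore not free; it costs a fixed positive order, and the transversality you invoke only yields the correct statement in the form actually needed, namely that $q$ and $\theta$ are coprime, so that $(x,y)^{M}\subset(q,\theta)$ for some $M$.

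Second, and more seriously, your Newton iteration only converges in the $(x,y)$-adic topology, so it produces a formal $u$; promoting it to a convergent one is exactly the hard point, and you only assert that ``careful estimates'' handle it. The iteration is in fact unnecessary. Because $X_q(q)=0$, a direct computation of $\exp[f-uq]X_q$ restricted to $\{q=0\}$ shows that the $u$-dependence is \emph{exactly} affine, with coefficient $h\cdot(h\circ\Phi_f)$ where $h=cq_y-dq_x$ --- not $\theta^2$: the ``mixed term'' $K(f,u)$ that you relegate to higher order is precisely $u\,h\,\bigl(h\circ\Phi_f-h\bigr)$ modulo $q$, and it should be absorbed into the linear coefficient rather than iterated away. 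A single application of the Nullstellensatz then gives $(x,y)^{2M}\subset\bigl(q,\;h\cdot(h\circ\Phi_f)\bigr)$, and since $<\Phi_f>\in(x,y)^{2M}$ once $N$ is large, one solves $<\Phi_f>=Aq+u\,h\,(h\circ\Phi_f)$ with $u\in\C\{x,y\}$ in one stroke --- no restriction to branches, no use of Proposition \ref{pro11} inside this lemma, and no formal-to-convergent passage. As written, your proof has a gap both at the unit claim and at the convergence of the iteration.
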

\begin{proof}
We have
\begin{align*}
\restr{\frac{\partial}{\partial x}x\circ\Phi_{f-uq}}{\{q=0\}}&=\restr{\frac{\partial}{\partial x}\sum_{i=0}^{\infty}\frac{(f-uq)^i}{i!}\mathrm{ad}_{X_q}^i(x)}{\{q=0\}}\\
&=\restr{\frac{\partial}{\partial x}x\circ\Phi_f}{\{q=0\}}-\restr{u\cdot\frac{\partial q}{\partial x}\cdot\sum_{i=1}^{\infty}\frac{f^{i-1}}{(i-1)!}\mathrm{ad}^i_{X_q}(x)}{\{q=0\}}\\
&=\restr{\frac{\partial}{\partial x}x\circ\Phi_f}{\{q=0\}}-\restr{u\cdot\frac{\partial q}{\partial x}\cdot\frac{\partial q}{\partial y}\circ\Phi_f}{\{q=0\}}.
\end{align*}
Similarly,
\begin{align*}
\restr{\frac{\partial}{\partial y}x\circ\Phi_{f-uq}}{\{q=0\}}&=\restr{\frac{\partial}{\partial y}x\circ\Phi_f}{\{q=0\}}-\restr{u\cdot\frac{\partial q}{\partial y}\cdot\frac{\partial q}{\partial y}\circ\Phi_f}{\{q=0\}},\\
\restr{\frac{\partial}{\partial x}y\circ\Phi_{f-uq}}{\{q=0\}}&=\restr{\frac{\partial}{\partial x}y\circ\Phi_f}{\{q=0\}}+\restr{u\cdot\frac{\partial q}{\partial x}\cdot\frac{\partial q}{\partial x}\circ\Phi_f}{\{q=0\}},\\
\restr{\frac{\partial}{\partial y}y\circ\Phi_{f-uq}}{\{q=0\}}&=\restr{\frac{\partial}{\partial y}y\circ\Phi_f}{\{q=0\}}+\restr{u\cdot\frac{\partial q}{\partial y}\cdot\frac{\partial q}{\partial x}\circ\Phi_f}{\{q=0\}}.\\
\end{align*}
This implies that
\begin{align}
\restr{<\Phi_{f-uq}>}{\{q=0\}} &=\restr{<\Phi_f>}{\{q=0\}}\nonumber \\&-\restr{u\cdot\left(c\frac{\partial q}{\partial y}-d\frac{\partial q}{\partial x}\right)\cdot\left(\left(c\frac{\partial q}{\partial y}-d\frac{\partial q}{\partial x}\right)\circ\Phi_f\right)}{\{q=0\}}.\label{12}
\end{align}
Denote by $h=c\frac{\partial q}{\partial y}-d\frac{\partial q}{\partial x}$. Then $\{h=0\}$ is the tangent curve of $\LL$ and the foliation defined by the level sets of $q$. Since at each singularity $p_i$ of $\tF$, the irreducible component $T_i$ of $T$ is transverse to $\tL$, the irreducible components of the strict transform of $\{h=0\}$ at $p_i$ are also transverse to $T_i$. This implies that $(q,h)=1$ and the two curves $\{q\cdot h=0\}$ and $\{q\cdot (h\circ\Phi_f)=0\}$ are topologically conjugated.  
By Hilbert's Nullstellensatz and the proof of Proposition \ref{pro11} there exists a natural $M$ such that $(x,y)^M\subset(q,h)$ and  $(x,y)^M\subset(q,h\circ  \Phi_f)$. This implies that $(x,y)^{2M}\subset(q,h\cdot(h\circ \Phi_f))$. So if $<\Phi_{f}>\in(x,y)^{2M}$, by \eqref{12} we can choose $u\in\C\{x,y\}$ such that $q|<\Phi_{f-uq}>$.
\end{proof}

\begin{rema}
If we replace the condition ``$\tF$ and $\tF'$ are locally strictly analytically conjugated" in Theorem \ref{thr2} by the condition ``$\FF$ and $\FF'$ are in $\MM$'' then  the conclusion in Theorem \ref{thr2} becomes: ``For all natural $M\ge N$ there exists $\LL''_M$ such that $J^M(S(\FF,\LL''_M))=J^M(S(\FF',\LL'))$''. Indeed, in that case, because the Camacho-Sad indices are not rational, $\tF$ and $\tF'$ are locally formally conjugated. So we can choose $\psi$ in \eqref{equa13} such that 
$$J^{M}(\psi_i\circ g_{D,p_i}(\tF,\tL)\circ\psi_i^{-1})=J^{M}(g_{D,p_i}(\tF',\tL')).$$
\end{rema}

\begin{coro}\label{cor17}
Let $\FF\in\mathcal{M}$ defined by a $1$-form $\omega$ then there exists a natural $N$ such that if $\FF'\in\mathcal{M}$ is defined by a $1$-form $\omega'$ satisfying that $J^N\omega=J^N\omega'$ and the vanishing holonomy representations of $\FF$ and $\FF'$ are strictly analytically conjugated, then $\FF$ and $\FF'$ are strictly analytically conjugated.
\end{coro}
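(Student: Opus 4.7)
The plan is to combine Theorems \ref{thr1} and \ref{thr2} by showing that matching $N$-jets of $\omega$ and $\omega'$ forces every hypothesis of Theorem \ref{thr1}(iii) to hold. For $N$ large enough, depending only on $\FF$, the $N$-jet of $\omega$ completely controls the reduction process: $\FF$ and $\FF'$ then share a common desingularization $\sigma$, the same set of singularities after blowing-up, the same Camacho-Sad indices at each singularity, and $\Sep(\tF)\para\Sep(\tF')$. In particular, for any $\sigma$-absolutely dicritical foliation $\LL_0$ transverse to the separatrices of $\tF$, provided by Lemma \ref{lem1}, Notation \ref{no5} gives $\RR_\FF(\LL_0)=\RR_{\FF'}(\LL_0)$, and $\mathrm{CS}(\tF)=\mathrm{CS}(\tF')$.

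Next I apply Theorem \ref{thr2} to produce a common element of $\mathfrak{S}(\FF,\LL_0)$ and $\mathfrak{S}(\FF',\LL_0)$. Its two hypotheses must be checked. For the local strict analytic conjugacy of $\tF$ and $\tF'$ near each singularity of $\DD$, the strict conjugacy of the vanishing holonomies, together with a Mattei--Moussu-type local rigidity result for reduced singularities with non-rational Camacho-Sad index, yields strict local conjugations on a neighborhood of each singularity. For the jet-matching condition, take $\LL=\LL'=\LL_0\in\RR_\FF(\LL_0)=\RR_{\FF'}(\LL_0)$; each sliding $g_{D,p}(\tF,\tL_0)$ is obtained as a composition of a holonomy of $\tF$ around a component of $\DD$ with $\LL_0$-projections, so its $M$-jet is controlled by a sufficiently high jet of $\omega$. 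Choosing $N$ larger than the threshold $N_0$ provided by Theorem \ref{thr2}, together with the further constant needed to pass from the $N$-jet of $\omega$ to the $N_0$-jet of the slidings, one obtains $J^{N_0}(S(\FF,\LL_0))=J^{N_0}(S(\FF',\LL_0))$. Theorem \ref{thr2} then produces $\LL''$ strictly conjugate to $\LL_0$ with $S(\FF,\LL'')=S(\FF',\LL_0)$, so that $\mathfrak{S}(\FF,\LL_0)\cap\mathfrak{S}(\FF',\LL_0)\neq\emptyset$. Theorem \ref{thr1}(iii)$\Rightarrow$(i) then concludes that $\FF$ and $\FF'$ are strictly analytically conjugated.

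The main technical point I anticipate is rendering quantitative the assertion \emph{``matching $N$-jets of $\omega$ forces matching $N_0$-jets of the slidings''}. Since the slidings are computed after a finite sequence of blow-ups, composed with holonomies along loops in the components of $\DD$ and with projections through leaves of $\LL_0$, this amounts to controlling how many derivatives of the $1$-form at the origin determine the relevant holonomies and projections up to a given order. All such bounds depend only on the combinatorics of $\sigma$ and on numerical data attached to $\FF$, hence can be absorbed into a single $N$ depending only on $\FF$, as required.
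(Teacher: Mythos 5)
Your proposal follows essentially the same route as the paper: use the jet condition $J^N\omega=J^N\omega'$ to get agreement of the slidings to high order for a common fibration $\LL_0$, invoke Theorem \ref{thr2} (whose local-conjugacy hypothesis is supplied by the holonomy conjugation and Mattei--Moussu rigidity at non-degenerate reduced singularities) to upgrade this to an exact coincidence $S(\FF,\LL'')=S(\FF',\LL_0)$, and conclude via Theorem \ref{thr1}(iii)$\Rightarrow$(i). The paper's own proof is a terser version of exactly this argument (it likewise leaves the ``$N$-jet of $\omega$ controls the $m(N)$-jet of the slidings'' step at the level of an assertion), so there is nothing substantive to add.
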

\begin{proof}
Let $\LL\in\mathcal{R}_0$ then $J^{m(N)}S(\FF,\LL)=J^{m(N)}S(\FF',\LL)$ where $m(M)$ is an increasing function on $N$ and $m(N)\rightarrow\infty$ when $N\rightarrow\infty$. By Theorem \ref{thr2} if $N$ is big enough there exists $\LL''\in\mathcal{R}_0$ such that $S(\FF,\LL'')=S(\FF',\LL)$. By Theorem \ref{thr1}, $\FF$ and $\FF'$ are strictly analytically conjugated.
\end{proof}
\begin{rema} This Corollary is consistent with the result of J.F. Mattei in \cite{Mat1} which says that the dimension of moduli space of the equisingular unfolding of a foliation is finite. Note that the vanishing holonomy representations of two foliations that are jointed by a unfolding are conjugated but the converse is not true.
\end{rema}

\begin{coro}\label{cor19}
Let $\LL$ be a $\sigma$-absolutely dicritical foliation defined by $1$-form $\omega$. There exists a natural $N$ such that if $\LL'$ is a $\sigma$-absolutely dicritical foliation defined by   $\omega'$ satisfying $J^N\omega=J^N\omega'$ and the Dulac maps of  $\tL$ and $\tL'$ are the same then $\LL$ and $\LL'$ are strictly analytically conjugated.
\end{coro}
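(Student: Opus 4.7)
The plan is to introduce an auxiliary non-dicritical foliation $\FF\in\MM$ admitting $\sigma$ as its desingularization, and then reduce Corollary~\ref{cor19} to Theorems~\ref{thr1} and~\ref{thr2} in a pattern parallel to the proof of Corollary~\ref{cor17}, but now comparing the pairs $(\FF,\LL)$ and $(\FF,\LL')$ instead of two foliations of $\MM$ directly.

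First I would construct a foliation $\FF\in\MM$ whose desingularization map is $\sigma$, which has $\Sep(\tF)\pitchfork\tL$ and such that $\LL,\LL'\in\RR_\FF(\LL)$. Concretely, one places enough reduced nondegenerate singularities of $\tF$ on each irreducible component of $\DD$, choosing the transverse separatrix direction at every such singularity distinct from the direction of $\tL$ at that point, and choosing irrational Camacho-Sad indices compatible with the Chern class of each component (cf.~\cite{Cam-Sad}). The hypothesis that $\tL$ and $\tL'$ have the same Dulac maps at every corner is given, and the equality $J^N\omega=J^N\omega'$ (already for $N\ge 1$) forces $\tL$ and $\tL'$ to share the same tangent direction at every point of $\DD$, so the invariant curves of $\tL$ and $\tL'$ through each singularity of $\tF$ are tangent. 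Hence both $\LL$ and $\LL'$ lie in $\RR_\FF(\LL)$.

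Next I would apply Theorem~\ref{thr2} with $\FF'=\FF$ (condition (i) is then trivial). Each sliding $g_{D,p_i}(\tF,\tL)$ is obtained from the holonomy of $\tF$ on the tangent curve $T(\tF,\tL)$ together with the projection along $\tL$, a construction that depends on only finitely many jets of $\omega$; thus for any prescribed $M$, taking $N$ sufficiently large makes $J^N\omega=J^N\omega'$ imply $J^M(S(\FF,\LL))=J^M(S(\FF,\LL'))$. Choosing $N$ above the threshold of Theorem~\ref{thr2} yields a $\sigma$-absolutely dicritical foliation $\LL''$ strictly conjugate to $\LL$ with $S(\FF,\LL'')=S(\FF,\LL')$. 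The construction of $\LL''$ in the proof of Theorem~\ref{thr2} (see Lemma~\ref{lem17}) produces a conjugating diffeomorphism whose lift to $(\MM,\DD)$ is the identity on $\DD$ and tangent to the identity at each singularity of $\tF$, so $\LL''\in\RR_\FF(\LL)$.

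Finally I would apply Theorem~\ref{thr1}, direction (iii)$\Rightarrow$(i), with $\FF$ in both roles, $\LL''$ in place of $\LL$, and $\LL'$ kept. The hypotheses on vanishing holonomy representations and Camacho-Sad indices hold trivially, and $S(\FF,\LL'')=S(\FF,\LL')$ places this common set in $\mathfrak{S}(\FF,\LL)\cap\mathfrak{S}(\FF,\LL)$. Inspection of the proof of (iii)$\Rightarrow$(i) shows that the resulting strict conjugacy of $\FF$ with itself actually sends $(\tF,\tL'')$ to $(\tF,\tL')$, hence sends $\LL''$ to $\LL'$. Composing this symmetry with the strict conjugacy $\LL\to\LL''$ from the previous step gives the required strict analytic conjugacy between $\LL$ and $\LL'$. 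The main obstacle is the first step: producing a non-dicritical $\FF\in\MM$ with the simultaneous compatibility requirements (irrational indices, Chern-class constraints on each component of $\DD$, and transversality with $\tL$). This is a global genericity argument on $\DD$, tractable but requiring careful bookkeeping; once it is in place the remainder of the proof is a direct application of the two main theorems.
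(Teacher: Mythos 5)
Your proposal follows the same route as the paper: introduce an auxiliary non-dicritical foliation $\FF\in\MM$ desingularized by $\sigma$ and transverse to $\tL$, apply Theorem~\ref{thr2} with $\FF'=\FF$ to produce $\LL''$, and conclude with the gluing argument of Theorem~\ref{thr1} (iii)$\Rightarrow$(i). The one substantive step you defer --- the existence of such an $\FF$ --- is precisely where the paper does its only real work. Your sketch (``place reduced singularities with prescribed irrational Camacho--Sad indices compatible with the Chern classes'') reads as a local-realization-plus-gluing argument, which is not obviously available; the paper instead writes $\FF$ down explicitly as the logarithmic foliation
$\omega_0=\prod_{i}(f_ig_i)\,\sum_{i}\bigl(\lambda_i\,df_i/f_i+dg_i/g_i\bigr)$,
where $C_i=\{f_i=0\}$ and $C_i'=\{g_i=0\}$ are curves whose strict transforms meet each component $D_i$ at two non-corner points transversally to $\tL$, and the residues $\lambda_i$ are chosen so that no nontrivial rational combination of $1,\lambda_1,\dots,\lambda_k$ is rational. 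The Camacho--Sad indices at all corners and at the points $p_i,q_i$ are then explicit ratios of $\Q$-linear combinations of the $\lambda_i$ and hence irrational, which is what puts $\FF$ in $\MM$. You should supply this (or an equivalent) construction for the proof to be complete. A minor slip: $J^N\omega=J^N\omega'$ for $N\ge 1$ does \emph{not} in general force the invariant curves of $\tL$ and $\tL'$ to be tangent at singularities of $\tF$ lying on components created by later blow-ups; the required $N$ grows with the depth of $\sigma$, which is harmless here since $N$ must be large anyway for Theorem~\ref{thr2}.
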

\begin{proof}
Suppose that $\DD=\cup_{i=1,\ldots, k}D_i$ where $D_i$ is an irreducible component of $\DD$.  We take a pair of irreducible functions $f_i$ and $g_i$ for each $i=1,\ldots, k$, such that the curve $C_i=\{f_i=0\}$ and $C'_i=\{g_i=0\}$ satisfy the following properties:
\begin{itemize}
\item[1.] The strict transforms $\tilde{C_i}$ and $\tilde{C'_i}$ cut $D_i$ at two different points $p_i$, $q_i$, respectively, such that none of them is a corner.
\item[2.] $\tilde{C_i}$, $\tilde{C'_i}$ are smooth and transverse to the invariant curve of $\tL$ through $p_i$, $q_i$ respectively. 
\end{itemize} 
Because $[\C:\Q]$ is an infinite field extension, there exists  $(\lambda_1,\lambda_2,\ldots,\lambda_k)\in\C^k$ such that 
$$\sum_{i=1}^k c_i\lambda_i\not\in\Q,\;\forall (c_1,\ldots,c_k)\in\Q^k\setminus\{(0,\ldots,0)\}.$$
Now, let us consider the non-dicritical foliation $\FF$ defined by the $1$-form
$$\omega_0=\prod_{i=1}^k(f_ig_i)\cdot \left(\sum_{i=1}^k\left(\lambda_i\frac{df_i}{f_i}+\frac{dg_i}{g_i}\right)\right).$$
Then $\FF$ admits $\sigma$ as its desingularization map and the singularities of the strict transform $\tF$ are the corners of $\DD$ and $p_i,q_i$, $i=1,\ldots,k$.  We claim that at each singularity, the Camacho-Sad index of $\tF$ is not rational. Indeed, denote by $m_{ij}$ the multiplicity of $f_i\circ\sigma$ and $g_i\circ\sigma$ on $D_j$. At the corner $p_{ij}=D_i\cap D_j$, we take  coordinates $(x,y)$ such that $D_i=\{x=0\}, D_j=\{y=0\}$.  In this coordinate system, we can write $\sigma^*\omega_0$ as
\begin{equation*}
\sigma^*\omega_0=u(x,y)x^{2\sum_{l=1}^{k} m_{li}}y^{2\sum_{l=1}^{k} m_{lj}}\sum_{l=1}^k\left((\lambda_l+1)m_{li}\frac{dx}{x}+(\lambda_l+1)m_{lj}\frac{dy}{y}+ \alpha_{ij}\right),
\end{equation*}
where $u(x,y)$ is a unit and $\alpha_{ij}$ is a holomorphic form. So the Camacho-Sad index of $\tF$ at $p_{ij}$ is
\begin{equation*}
\mathrm{I}(p_{ij})=\frac{\sum_{l=1}^k(\lambda_l+1)m_{lj}}{\sum_{l=1}^k(\lambda_l+1)m_{li}}\not\in\Q.
\end{equation*}
Similarly, the Camacho-Sad indices of $\tF$ at $p_i$ and $q_i$, respectively, are
\begin{equation*}
\mathrm{I}(p_i)=\frac{\sum_{l=1}^k(\lambda_l+1)m_{li}}{\sum_{l=1}^k \lambda_l}\not\in\Q,\; \mathrm{I}(q_i)=\frac{\sum_{l=1}^k(\lambda_l+1)m_{li}}{k}\not\in\Q.
\end{equation*} 
Now if $J^N\omega=J^N\omega'$ then $J^{m(N)}S(\FF,\LL)=J^{m(N)}S(\FF,\LL')$ where $m(N)$ is an increasing function on $N$ and $m(N)\rightarrow\infty$ when $N\rightarrow\infty$. Moreover if $N$ is big enough the invariant curves of $\tL$ and $\tL'$ through the singularities of $\tF$ are tangent. By using Theorem \ref{thr2} for $\FF'=\FF$, there exists a foliation $\LL''$ strictly conjugated with $\LL$ such that the two couples $(\FF,\LL'')$ and $(\FF,\LL')$ are strictly conjugated. Consequently, $\LL$ and $\LL'$ are strictly conjugated. 
\end{proof}

\section*{Acknowledgements}
This paper is based on the main part of my doctoral thesis at the Institute of Mathematics of Toulouse, France. I would like to thank my advisors,Yohann Genzmer and  Emmanuel Paul, for having
guided me so well over all these years, Jean-François Mattei for useful discussions and suggestions about the problem.

\bigskip

\end{document}